\title{Algorithms for Chip-Firing on Weighted Graphs}
\author{Ben Doyle}
\newcommand{\Div}{\text{Div}}
\newcommand{\Prin}{\text{Prin}}
\newcommand{\lcm}{\mathrm{lcm}}
\newcommand{\Jac}{\text{Jac}}
\newcommand{\Pic}{\text{Pic}}
\newcommand{\Z}{\mathbb{Z}}
\newcommand{\N}{\mathbb{N}}
\newcommand{\Ind}{\text{Ind}}
\newcommand{\val}{\text{val}}
\newtheorem{theorem}{Theorem}[section]
\theoremstyle{definition}
\newtheorem{definition}[theorem]{Definition}
\newtheorem{lemma}[theorem]{Lemma}
\newtheorem{corollary}[theorem]{Corollary}
\newtheorem{proposition}[theorem]{Proposition}
\begin{document}

\begin{center}
    \vspace{0.25in}
    
    {\large\uppercase{\textbf{Algorithms for Chip-Firing on Weighted Graphs}}}
    
    \vspace{0.25in}
    
    \uppercase{Ben Doyle}
\end{center}

\begin{abstract}
    \noindent {\small\textsc{Abstract.}} We extend the notion of chip-firing to weighted graphs, and generalize the Greedy Algorithm and Dhar's Burning Algorithm to weighted graphs. For a vertex $q \in V(\Gamma)$, we give an upper bound for the number of linearly equivalent $q$-reduced divisors. Finally, we illustrate a method of finding all maximal unwinnable divisors on weighted graphs.
\end{abstract}

\section{Introduction}

It has long been recognized that finite graphs are discrete analogues of Riemann surfaces, and in particular, divisor theory on graphs is quite similar to divisor theory on Riemann surfaces (this is not simply a coincidence, as detailed by Baker \cite{baker}). A divisor on a graph can be thought of as a (discrete) assignment of wealth and/or debt to each vertex, and chip-firing serves as a natural method of navigating between different divisors on graphs. If one divisor can be reached from another by some sequence of chip-firing moves (known as a ``firing script''), then we say that these divisors are linearly equivalent. A fundamental task associated to chip-firing is known as the Dollar Game: given a particular divisor, is there a linearly equivalent divisor with the property that every vertex is out of debt?

The Greedy Algorithm and Dhar's Burning Algorithm are two methods for determining the winnability of a divisor with regards to the Dollar Game on a graph. These algorithms, and much more theory regarding chip-firing, are explored in Corry and Perkinson's text, \emph{Divisors and Sandpiles: An Introduction to Chip-Firing} \cite{perkinson}, culminating in a proof of the Riemann-Roch Theorem for Graphs:

\begin{theorem}[Riemann-Roch]
Let $D$ be a divisor on a graph $\Gamma$ of genus $g = |E|-|V|+1$ with canonical divisor $K$. Then
\[r(D)-r(K-D) = \deg(D)+1-g,\]
where $r(D)$ is the \emph{rank} of $D$, that is, $r(D)$ is the largest $k$ such that $D-D'$ is winnable for all $D' \in \Div^k(\Gamma)$.
\label{Riemann-Roch}
\end{theorem}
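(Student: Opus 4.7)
The plan is to follow the strategy of Baker and Norine, as expounded in \cite{perkinson}. The approach reduces the theorem to a symmetric statement at degree $g-1$: it suffices to produce a divisor $\nu$ of degree $g - 1$ satisfying $r(\nu) = r(K - \nu) = -1$, after which the full identity follows by controlling how $r$ varies as one adjusts $\deg(D)$.

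First, I would invoke the theory of $q$-reduced divisors and Dhar's Burning Algorithm (as generalized in this paper) to obtain an effective criterion: $r(D) \geq 0$ if and only if the $q$-reduced representative of $D$ is effective. Building on this, I would prove the central structural lemma: a divisor $D$ with $\deg(D) = g-1$ satisfies $r(D) = -1$ precisely when $D$ is linearly equivalent to an \emph{orientation divisor}
\[
D_\mathcal{O} \;=\; \sum_{v \in V(\Gamma)} \bigl(\mathrm{indeg}_\mathcal{O}(v) - 1\bigr)(v)
\]
for some acyclic orientation $\mathcal{O}$ of $\Gamma$. Since $\deg(D_\mathcal{O}) = |E|-|V| = g-1$ and $D_\mathcal{O} + D_{\bar{\mathcal{O}}} = K$ for the reversed (also acyclic) orientation $\bar{\mathcal{O}}$, this immediately produces the required $\nu = D_\mathcal{O}$ and establishes the symmetry $r(D) = -1 \Leftrightarrow r(K - D) = -1$ on divisors of degree $g - 1$.

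With $\nu$ in hand, the final step extends the identity to all degrees. One tracks how $r(D)$ and $r(K-D)$ change as chips are added to or removed from $D$: the fact that adding a single chip changes $r$ by at most $1$, combined with a Frobenius-style counting argument based on the partition of $\Pic(\Gamma)$ into classes with $r \geq 0$ and those with $r = -1$, pins down the value of $r(D) - r(K-D)$ exactly. The main obstacle is the structural lemma identifying degree-$(g-1)$ divisors of empty linear system with orientation divisors: this requires carefully weaving together $q$-reducedness, the Burning Algorithm, and the combinatorics of acyclic orientations to produce, from such a $D$, an acyclic orientation whose indegree sequence realizes $D$. Once that correspondence is in place, everything else is bookkeeping.
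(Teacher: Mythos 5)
The paper offers no proof of this theorem: it is quoted as background, with the argument attributed to Baker and Norine \cite{baker2} as presented in \cite{perkinson}. Your outline faithfully reproduces exactly that cited argument --- reduction to non-special divisors of degree $g-1$, their identification (up to linear equivalence) with acyclic-orientation divisors $D_{\mathcal{O}}$ via $q$-reduced forms and the Burning Algorithm, the symmetry coming from $D_{\mathcal{O}} + D_{\bar{\mathcal{O}}} = K$, and the final degree bookkeeping --- so it matches the only proof the paper points to; just be aware that the structural lemma you defer is where essentially all of the work lives.
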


The proof of the Riemann-Roch Theorem detailed in \cite{perkinson} (originally proven by Baker and Norine in \cite{baker2}) involves the development of $q$-reduced, maximal unwinnable, and canonical divisors using Dhar's Burning Algorithm.

In this paper we construct generalizations of the Greedy Algorithm and Dhar's Burning Algorithm to weighted graphs. From the Modified Burning Algorithm, we demonstrate that not all $q$-reduced divisors are unique on weighted graphs, and define $q$-classes as a consequence. Finally, we demonstrate a process by which we can find all $q$-reduced maximally unwinnable divisors on a given graph $\Gamma$.

\subsection*{Notation and Terminology}

A \emph{graph} $\Gamma$ is a set $V(\Gamma)$ of vertices, a set $H(\Gamma)$ of half-edges, a root map $r:H(\Gamma)\rightarrow V(\Gamma)$, and an involution $h \mapsto \overline{h}$ of $H(\Gamma)$. We refer to an orbit under the involution as an \emph{edge}, and denote the set of edges of $\Gamma$ by $E(\Gamma)$. For the purposes of this paper, a graph $\Gamma$ is assumed to be a finite, connected multigraph with no loops or legs. For vertices $u,v \in V(\Gamma)$, we denote by $E(v)$ as the set of all edges incident to $v$, and denote by $E(u,v)$ the set of all edges connecting $u$ and $v$. The \emph{valency} of a vertex is $\val(v) = |E(v)|$.

A \emph{weighted graph} is a graph along with weight maps $w_V:V(\Gamma) \rightarrow \Z^+$ and $w_E:E(\Gamma) \rightarrow \Z^+$ (we typically denote both maps simply by $w$) such that the weight of an edge divides the weights of each of its root vertices. There is a natural extension of valency to weighted graphs, which we refer to as the \emph{weighted valency}, defined by
\[\val(v) = \sum_{e \in E(v)} \dfrac{w(v)}{w(e)}.\]

Finally, let a group action of a group $G$ on a graph $\Gamma$ be a homomorphism $G \rightarrow \text{Aut}(\Gamma)$. Then a \emph{quotient graph} $\Gamma/G$, where $\Gamma$ is a graph and $G$ is a group acting on $\Gamma$, is defined by letting $V(\Gamma/G) = V(\Gamma)/G$, letting $H(\Gamma/G) = H(\Gamma)/G$, and preserving endpoint and involution maps, i.e. if $O(h)$ is the orbit of a half-edge $h$ under $G$, then
\[r(O(h)) = O(r(h)),\quad \overline{O(h)} = O(\overline{h}).\]
It is assumed that a group action does not permit any automorphisms which map a half-edge $h$ to its involution $\overline{h}$ or a vertex $v$ to any adjacent vertices, so as to avoid quotient graphs with legs or loops.

\section{Background}

We first define standard chip-firing on graphs. Define a \emph{divisor} on a graph $\Gamma$ to be a member of the free abelian group
\[\Div(\Gamma) = \Z V(\Gamma) = \left\{\sum_{v \in V} D(v)v:D(v) \in \Z\right\},\]
and the \emph{degree} of a divisor $D$ as
\[\deg(D) = \sum_{v \in V(\Gamma)} D(v).\]
(We also sometimes say that $D(v)$ is the \emph{degree} of $v$). A divisor $D$ is called \emph{effective} if $D(v) \geq 0$ for all $v \in V(\Gamma)$, and we say $D \geq 0$. In general, if $D_1(v) \geq D_2(v)$ for all $v \in V(\Gamma)$, we say $D_1 \geq D_2$. We denote the set of all divisors of degree $k$ on $\Gamma$ as $\Div^k(\Gamma)$. 

\begin{definition}
Suppose $\Gamma$ is an unweighted graph. Let $D,D' \in \Div(\Gamma)$ and $v \in V(\Gamma)$. Then $D'$ is obtained from $D$ by a \emph{lending move at v} if for $u \neq v$,
\[D'(u) = D(u) + |E(u,v)|, \quad \text{and} \quad D'(v) = D(v)-\val(v).\]
Similarly, $D'$ is obtained from $D$ by a \emph{borrowing move at v} if for $u \neq v$,
\[D'(u) = D(u) - |E(u,v)|, \quad \text{and} \quad D'(v) = D(v)+\val(v).\]
\end{definition}

Less technically, a lending move at $v$ is the action of sending a single chip from $v$ along each incident edge to each adjacent vertex, and a borrowing move at $v$ is the inverse of a lending move (See Figure \ref{borrow-lend}). 

\begin{figure}[ht]
    \centering
    \begin{tikzpicture}[node distance={15mm},main/.style = {draw,circle,fill, inner sep=2pt}]
    \begin{scope}
    \node[main,label = {[red]right:-2}] at (0:1cm) (1) {};
    \node[main,label = {[blue]above:1}] at (90:1cm) (2) {};
    \node[main,label = {[blue]left:3}] at (180:1cm) (3) {};
    \node[main,label = {[red]below:-1}] at (270:1cm) (4) {};
    
    \draw[-{Latex[length=3mm]},blue] (3) to (1);
    \draw[-{Latex[length=3mm]},blue] (3) to[out=75,in=195] (2);
    \draw[-{Latex[length=3mm]},blue] (3) to[out=15,in=255] (2);
    \draw[-{Latex[length=3mm]},blue] (3) to (4);
    \draw (1) to (2);
    \draw (1) to (4);
    \end{scope}
    
    \begin{scope}[xshift=5cm]
    \node[main,label = {[red]right:-1}] at (0:1cm) (1) {};
    \node[main,label = {[blue]above:3}] at (90:1cm) (2) {};
    \node[main,label = {[red]left:-1}] at (180:1cm) (3) {};
    \node[main,label = {[blue]below:0}] at (270:1cm) (4) {};
    
    \draw[-{Latex[length=3mm]},red] (3) to (1);
    \draw (3) to[out=75,in=195] (2);
    \draw (3) to[out=15,in=255] (2);
    \draw (3) to (4);
    \draw[-{Latex[length=3mm]},red] (2) to (1);
    \draw[-{Latex[length=3mm]},red] (4) to (1);
    \end{scope}
    
    \begin{scope}[xshift=10cm]
    \node[main,label = {[blue]right:2}] at (0:1cm) (1) {};
    \node[main,label = {[blue]above:2}] at (90:1cm) (2) {};
    \node[main,label = {[red]left:-2}] at (180:1cm) (3) {};
    \node[main,label = {[red]below:-1}] at (270:1cm) (4) {};
    
    \draw (3) to (1);
    \draw (3) to[out=75,in=195] (2);
    \draw (3) to[out=15,in=255] (2);
    \draw (3) to (4);
    \draw (2) to (1);
    \draw (4) to (1);
    \end{scope}
    \draw[->] (2,0) to (3,0);
    \draw[->] (7,0) to (8,0);
    \end{tikzpicture}
    \caption[Borrowing and lending on unweighted graphs]{The effects of a lending move at the left vertex followed by a borrowing move at the right vertex.}
    \label{borrow-lend}
\end{figure}
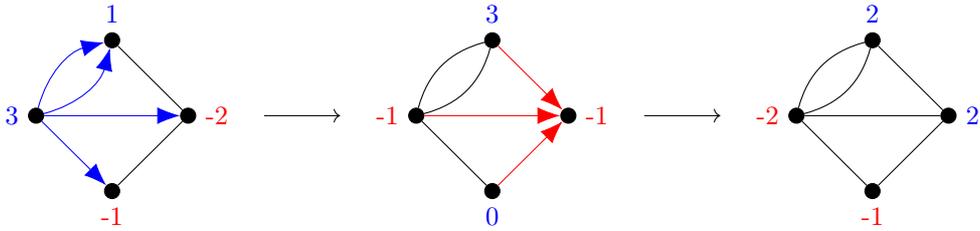

A collection of lending and borrowing moves is referred to as a \emph{firing script}, and is written as a function $\sigma:V(\Gamma)\rightarrow \Z$, where $\sigma(v)$ is the number of lending moves at $v$. The set of all firing scripts is denoted by $\mathcal{M}(\Gamma)$. Firing scripts provide a way to navigate between divisors in $\Div^k(\Gamma)$, and in fact form an equivalence relation over $\Div^k(\Gamma)$. We say that two divisors $D_1,D_2 \in \Div^k(\Gamma)$ are \emph{linearly equivalent} if there exists some firing script which takes $D_1$ to $D_2$. We denote by $[D]$ the set of all divisors linearly equivalent to $D$.
\begin{definition}
If $D \in \Div^0(\Gamma)$ is linearly equivalent to the zero divisor, then we say that $D$ is a \emph{principal divisor}. The principal divisors form a group, which we denote by $\Prin(\Gamma)$. We define the \emph{Picard group} of $\Gamma$ as $\Pic(\Gamma) = \Div(\Gamma)/\Prin(\Gamma)$ and the \emph{Jacobian group} of $\Gamma$ as $\Jac(\Gamma) = \Div^0(\Gamma)/\Prin(\Gamma)$.
\end{definition}

\begin{definition}
Let $\Gamma$ be a graph with $n$ vertices $v_1,...,v_n$. We define the \emph{Laplacian} of $\Gamma$ to be the $n \times n$ matrix $L$ with

\[L_{ij} = \begin{cases}
\val(v_i) \quad& \text{if } i = j \\
-|E(v_i,v_j)| \quad& \text{if } i \neq j.
\end{cases}\]
\end{definition}

Thus if $D$ is a divisor on $\Gamma$, and $\sigma = (a_1,a_2,...,a_n)$ is a firing script on $\Gamma$ (i.e. if we perform $a_i$ lending moves at each vertex $v_i$) then the new divisor is $D-L\sigma$. As a result, we can say that two divisors $D_1$ and $D_2$ are linearly equivalent if there exists some firing script $\sigma$ such that $D_2 = D_1-L\sigma$. Thus, the Dollar Game can be written as follows: is there some $\sigma \in \mathcal{M}(\Gamma)$ such that $D-L\sigma \geq 0$? 

\subsection*{Quotient Graphs and Weighted Chip-Firing}

Consider the group action of $\Z/2\Z$ on $\Gamma$ in Figure \ref{fig:quotientmap2} below, where the nontrivial element acts by reflection across the diagonal. In moving from a graph $\Gamma$ to its quotient $\Gamma/G$, there is certainly a natural map from $\Div(\Gamma)$ to $\Div(\Gamma/G)$; if $v_1,...,v_n$ are mapped to the same vertex under the quotient operation, then the degree of this new vertex is $\sum_{i=1}^n D(v_i)$.

\begin{figure}[ht]
    \centering
    \begin{tikzpicture}[node distance={15mm},main/.style = {draw,circle,fill, inner sep=2pt}]
\begin{scope}
\node[main,label = left:{$v_1$},label = {[blue]above:1}] (1) {};
\node[main,label = right:{$v_2$},label = {[blue]above:1}] (2) [right of = 1] {};
\node[main,label = left:{$v_3$},label = {[red]below:-3}] (3) [below of = 1] {};
\node[main,label = right:{$v_4$},label = {[blue]below:1}] (4) [right of = 3] {};
\draw (1) -- (2);
\draw (1) -- (3);
\draw (3) -- (4);
\draw (4) -- (2);
\draw (3) -- (2);
\end{scope}

\begin{scope}[xshift = 7cm]
\node[main,label = left:{$\{v_1,v_4\}$},label = {[blue]above:2}] (1) {};
\node[main,label = left:{$\{v_3\}$},label = {[red]below:-3}] (2) [below of = 1] {};
\node[main,label = right:{$\{v_2\}$},label = {[blue]above:1}] (3) [right of = 1] {};
\draw (1) -- (3);
\draw (1) -- (2);
\draw (2) -- (3);
\end{scope}

\draw[->] (2.7,-.8) -- (5.8,-.8);
\end{tikzpicture}
\caption{The natural map of a principal divisor from a graph to its quotient.}
    \label{fig:quotientmap2}
\end{figure}
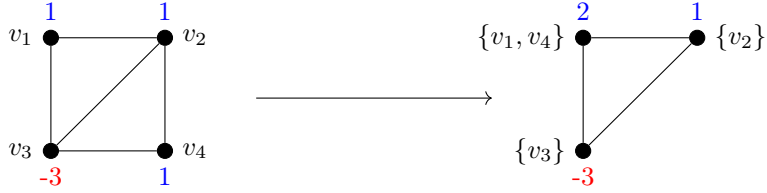

However, if chip-firing on the quotient graph is defined in the same way as the original graph, then this map is clearly not a homomorphism, as it maps a principal divisor to a non-principal divisor. The construction of weighted chip-firing provides a remedy for this; it defines chip-firing on a quotient graph in such a way that this natural map is a homomorphism.

\begin{definition}
A \emph{weighted quotient graph} is a quotient graph $\Gamma/G$ along with weight maps such that
\[w_V(O(v)) = |\text{Stab}(v)|,\; w_E(O(e)) = |\text{Stab}(e)|\]
for any $v \in V(\Gamma)$ and $e \in E(\Gamma)$. Typically both of these maps are denoted by $w$ unless specification is necessary.
\end{definition}

It is not immediately clear that the weight maps are well-defined. However, we know by the Orbit-Stabilizer theorem that for any pair of vertices or pair of edges which share an orbit, their stabilizers must be of the same order. We also know that $\text{Stab}(e) = \text{Stab}(h)$ is a subgroup of $\text{Stab}(r(h))$, which guarantees that $w(O(e))$ divides $w(O(v))$ whenever $O(v)$ is a root of $O(e)$ on the quotient graph $\Gamma/G$.

As an example, consider the quotient graph shown in Figure \ref{fig:quotientmap2}. We see that $v_2, v_3$, and the diagonal edge are fixed under the group action. Therefore their stabilizer is $\Z/2\Z$ and they are given weight 2. On the other hand, the outside edges, along with the vertices $v_1$ and $v_4$, only have stabilizer $\{1\}$ and thus have weight $1$. Therefore we can label the quotient graph with weights as in Figure 4. When it is clear what the weights are on a weighted graph, we may represent weights by vertices and edges of various sizes, with larger edges and vertices representing larger weights, rather than labelling all vertices and edges with weights (see Figure \ref{fig:representations}).

\begin{figure}[ht]
    \centering
    \begin{tikzpicture}[node distance={15mm},main/.style = {draw,circle,fill, inner sep=2pt}]
    \begin{scope}
        \node[main,label = above:1] (1) {};
        \node[main,label = right:2] (2) [below of = 1] {};
        \node[main,label = above:2] (3) [right of = 1] {};
        \draw (1) -- node[midway, above] {1} (3);
        \draw (1) -- node[midway, left] {1} (2);
        \draw (2) -- node[midway, below right] {2} (3);
    \end{scope}
    
    \begin{scope}[xshift = 4cm]
        \node[draw,circle,fill,inner sep=1pt] (1) {};
        \node[main] (2) [below of = 1] {};
        \node[main] (3) [right of = 1] {};
        \draw (1) -- (3);
        \draw (1) -- (2);
        \draw[line width = 1mm] (2) -- (3);
    \end{scope}
    \end{tikzpicture}
    \caption[Weighted graph representations]{Different representations of the same weighted graph}
    \label{fig:representations}
\end{figure}
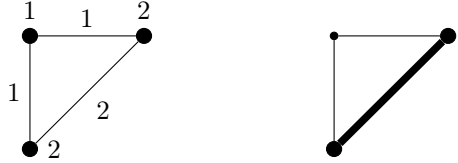

It will be useful to define one other quantity in relation to weighted graphs:

\begin{definition}
The \emph{charge} (or \emph{coweight}) of a vertex $v_i$ on a graph $\Gamma$ with vertices $v_1,...,v_n$ is defined as
\[c(v_i) := \dfrac{\lcm(w(v_1),...,w(v_n))}{w(v_i)},\]
where $w(v)$ denotes the weight of $v$. We refer to the quantity $\lcm(w(v_1),...,w(v_n))$ as the \emph{charge of} $\Gamma$, sometimes denoted $c(\Gamma)$.
\end{definition}

The charge of a vertex is dual to its weight, and bears much importance in the construction of chip-firing algorithms. On an unweighted graph, we assume all weights are 1, and therefore $c(v) = w(v) = 1$ for all vertices, and $c(\Gamma)= 1$.

Recall that the \emph{weighted valency} of a vertex is
\[\val(v) = \sum_{e \in E(v)} \dfrac{w(v)}{w(e)}.\]
Using this, we have a natural extension of chip-firing to weighted graphs.

\begin{definition}
Suppose $\Gamma$ is a weighted graph. Let $D,D' \in \Div(\Gamma)$ and $v \in V(\Gamma)$. Then $D'$ is obtained from $D$ by a \emph{lending move at v} if for $u \neq v$,
\[D'(u) = D(u)+\sum_{e \in E(u,v)} \dfrac{w(v)}{w(e)}, \quad \text{ and }\quad  D'(v) = D(v)-\val(v). \]
Similarly, $D'$ is obtained from $D$ by a \emph{borrowing move at v} if for $u \neq v$,
\[D'(u) = D(u)-\sum_{e \in E(u,v)} \dfrac{w(v)}{w(e)}, \quad \text{ and }\quad  D'(v) = D(v)+\val(v). \]
\end{definition}

In an intuitive sense, the weight of a vertex represents its ``strength'' in lending, while the weight of an edge represents its ``resistance'' to chip-firing. By this construction, weighted chip-firing looks notably different from unweighted chip-firing. For example, we see that, on our quotient graph with which we have been working, lending at $\{v_3\}$ sends out a total of three chips along two edges, while borrowing at $\{v_1,v_4\}$ still brings in a total of two chips along two edges:

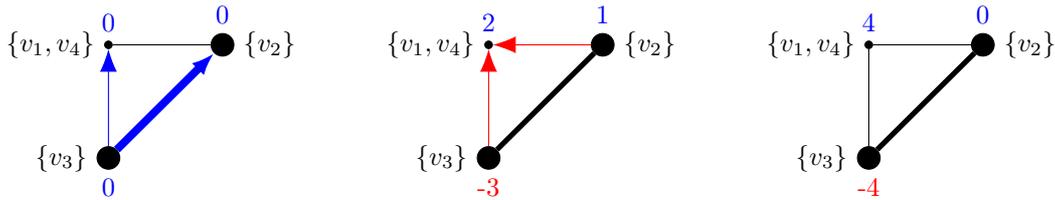
\begin{figure}[ht]
    \centering
    \begin{tikzpicture}[node distance={15mm},main/.style = {draw,circle,fill}]
    \begin{scope}
    \node[main,inner sep = 1pt,label = left:{$\{v_1,v_4\}$},label = {[blue]above:0}] (1) {};
    \node[main,inner sep = 3pt,label = left:{$\{v_3\}$},label = {[blue]below:0}] (2) [below of = 1] {};
    \node[main,inner sep = 3pt,label = right:{$\{v_2\}$},label = {[blue]above:0}] (3) [right of = 1] {};
    \draw (1) to (3);
    \draw[-{Latex[length=3mm]},blue] (2) to (1);
    \draw[-{Latex[length=3mm]},line width = 3pt, blue] (2) to (3);
    \end{scope}
    
    \begin{scope}[xshift = 5cm]
    \node[main,inner sep = 1pt,label = left:{$\{v_1,v_4\}$},label = {[blue]above:2}] (1) {};
    \node[main,inner sep = 3pt,label = left:{$\{v_3\}$},label = {[red]below:-3}] (2) [below of = 1] {};
    \node[main,inner sep = 3pt,label = right:{$\{v_2\}$},label = {[blue]above:1}] (3) [right of = 1] {};
    \draw[-{Latex[length=3mm]},red] (3) to (1);
    \draw[-{Latex[length=3mm]},red] (2) to (1);
    \draw[line width = 2pt] (2) to (3);
    \end{scope}
    
    \begin{scope}[xshift = 10cm]
    \node[main,inner sep = 1pt,label = left:{$\{v_1,v_4\}$},label = {[blue]above:4}] (1) {};
    \node[main,inner sep = 3pt,label = left:{$\{v_3\}$},label = {[red]below:-4}] (2) [below of = 1] {};
    \node[main,inner sep = 3pt,label = right:{$\{v_2\}$},label = {[blue]above:0}] (3) [right of = 1] {};
    \draw (3) to (1);
    \draw (2) to (1);
    \draw[line width = 2pt] (2) to (3);
    \end{scope}
    \end{tikzpicture}
    \label{fig:weighted-firing}
    \caption{Weighted lending and borrowing on a quotient graph.}
\end{figure}

While the chip-firing operations have become in some sense more complicated and asymmetric, we find that they are sufficient for our natural quotient map to serve as a homomorphism. As a result, we have analogous definitions of $\Prin(\Gamma),\Pic(\Gamma),$ and $\Jac(\Gamma)$ for weighted graphs.

\begin{definition}
Let $\Gamma$ be a weighted graph, $\Div(\Gamma)$ be the group of divisors on $\Gamma$, and $\Div^0(\Gamma)$ be the group of degree zero divisors. We define $\Prin(\Gamma)$ to be the group of \emph{principal divisors}, that is, divisors which are linearly equivalent (via \emph{weighted} chip-firing) to the zero divisor. We also define the \emph{Picard group} $\Pic(\Gamma)$ to be $\Div(\Gamma)/\Prin(\Gamma)$, and the \emph{Jacobian group} $\Jac(\Gamma)$ to be $\Div^0(\Gamma)/\Prin(\Gamma)$.
\end{definition}

\begin{definition}
Let $\Gamma$ be a weighted graph with $n$ vertices $v_1,...,v_n$. We define the \emph{weighted Laplacian} as the $n \times n$ matrix $L$ with
\[L_{i,j} = \begin{cases}
\val(v_i) \quad &\text{if } i=j \\
-\sum\limits_{e \in E(v_j,v_i)} \frac{w(v_j)}{w(e)} \quad &\text{if } i \neq j.
\end{cases}\]
\end{definition}

Note that the weighted Laplacian makes clear the asymmetry of weighted chip-firing; it is no longer necessarily a symmetric matrix. However, the weighted Laplacian still serves the same purpose as the Laplacian for unweighted graphs: it functions as a linear map $L:\Z^n \rightarrow \Z^n$ which inputs a firing script and outputs the corresponding principal divisor. We therefore can ask the same question as we do in the unweighted case: given a divisor $D$ on a graph $\Gamma$, is there a firing script $\sigma$ such that $D-L\sigma \geq 0$?

\section{The Modified Greedy Algorithm}

One of the basic methods of determining the ``winnability'' of a particular divisor on an unweighted graph is referred to as the Greedy Algorithm. The strategy is simple: repeatedly choose any vertex which is in-debt and borrow at that vertex until either: (i) the game is won, or (ii) you have borrowed at every vertex. The effectiveness of this algorithm relies on the fact that the kernel of the Laplacian on any unweighted graph is generated by the firing script $(1,1,...,1)$. This is not generally the case on weighted graphs, so to generalize the Greedy Algorithm we must first find the generator of the Laplacian for an arbitrary weighted graph.

\begin{lemma}\label{NetFire}
Let $\Gamma$ be a connected graph, and suppose $\sigma$ is a firing script on $\Gamma$ such that $L\sigma = \mathbf{0}$. Then $\sigma(u)w(u) = \sigma(v)w(v)$ for all $u,v \in V(\Gamma)$.
\end{lemma}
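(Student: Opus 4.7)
The plan is to show that the function $f: V(\Gamma) \to \Z$ defined by $f(v) := \sigma(v) w(v)$ is constant, by first rewriting the condition $L\sigma = \mathbf{0}$ as a discrete harmonic equation on $f$, and then applying a maximum-principle argument that uses connectivity of $\Gamma$.

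First I would unpack the $i$-th row of $L\sigma = \mathbf{0}$ using the definition of the weighted Laplacian. This gives
\[
\val(v_i)\sigma(v_i) \;=\; \sum_{u \neq v_i}\sum_{e \in E(u,v_i)} \frac{w(u)}{w(e)}\,\sigma(u).
\]
Then I would expand $\val(v_i) = \sum_{u \neq v_i}\sum_{e \in E(u,v_i)} \tfrac{w(v_i)}{w(e)}$ and bring everything to one side so that each edge between $v_i$ and a neighbor $u$ contributes a single term. After factoring $1/w(e)$, the equation at $v_i$ becomes
\[
\sum_{u \neq v_i}\sum_{e \in E(u,v_i)} \frac{w(v_i)\sigma(v_i) - w(u)\sigma(u)}{w(e)} \;=\; 0,
\]
i.e. $\sum_{u \neq v_i}\sum_{e \in E(u,v_i)} \tfrac{f(v_i)-f(u)}{w(e)} = 0$ with $f(v) = \sigma(v)w(v)$.

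Next I would run the standard maximum-principle argument. Let $M = \max_{v \in V(\Gamma)} f(v)$ and let $S = \{v : f(v) = M\}$. For any $v \in S$, every term $f(v) - f(u)$ in the displayed sum is nonnegative, and each coefficient $1/w(e)$ is positive, so the sum vanishes only if $f(u) = f(v) = M$ for every neighbor $u$ of $v$. Thus $S$ is closed under taking neighbors. Since $\Gamma$ is connected and $S$ is nonempty, $S = V(\Gamma)$, so $f$ is constant; that is, $\sigma(u)w(u) = \sigma(v)w(v)$ for all $u,v$.

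The main obstacle is the first step: the weighted Laplacian is no longer symmetric, so some care is needed to recognize that multiplying the $v_i$-equation on the left by $1$ (rather than reweighting) and expanding $\val(v_i)$ correctly yields the clean harmonic form in the variable $f = \sigma \cdot w$. Once this reformulation is in place, the maximum-principle/connectivity argument is routine, and no additional input beyond the divisibility $w(e)\mid w(v)$ and $\Gamma$ connected is required.
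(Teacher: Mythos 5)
Your proof is correct and follows essentially the same route as the paper: both define $f(v)=\sigma(v)w(v)$, observe that the row of $L\sigma=\mathbf{0}$ at a vertex is exactly $\sum_{u}\sum_{e\in E(u,v)}\tfrac{f(v)-f(u)}{w(e)}=0$, and conclude by a maximum principle using connectivity. The only cosmetic difference is that you phrase it directly (the set of maximizers is closed under adjacency) while the paper argues by contradiction at a maximizer adjacent to a strictly smaller neighbor.
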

\begin{proof}
Define the function $f:V(\Gamma) \rightarrow \Z$ as
\[f(v) = \sigma(v)w(v).\]
Now suppose $f$ is not constant. Then since $V(\Gamma)$ is a finite set, there must exist some $v_0 \in V(\Gamma)$ such that $f(v_0) \geq f(v)$ for all $v \in V(\Gamma)$, and $v_0$ is adjacent to some vertex $v_1$ with $f(v_1)<f(v_0)$. But then if for each $i$, $v_i$ is adjacent to $v_0$ via $e_{1,i},...,e_{n_i,i}$, we have
\[\sum_{i=1}^k \sum_{j=1}^{n_i}\left( \dfrac{\sigma(v_0)w(v_0)}{w(e_{j,i})}-\dfrac{\sigma(v_i)w(v_i)}{w(e_{j,i})}\right) = \sum_{j=1}^k\sum_{j=1}^{n_i} \dfrac{f(v_0)-f(v_j)}{w(e_{j,i})} >0.\]

Therefore firing $\sigma$ removes at least one extra chip from $v_0$, so $\sigma \not \in \ker(L)$. This is a contradiction. So $f$ must be constant, which means that $\sigma(v)w(v) = \sigma(u)w(u)$ for all $u,v \in V(\Gamma)$.
\end{proof}

\begin{theorem}\label{thm:script}
Let $\Gamma$ be a graph with Laplacian $L$, and recall that $c(v)$ is the charge of $v$. Then the firing script defined by
\[\sigma(v) = c(v)\]
for all $v \in V(\Gamma)$ generates $\ker(L)$.
\end{theorem}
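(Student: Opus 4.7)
The plan is to combine Lemma \ref{NetFire} with a direct verification that $\sigma(v)=c(v)$ lies in $\ker(L)$, and then use Lemma \ref{NetFire} again to show that every element of $\ker(L)$ is an integer multiple of this $\sigma$.

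First, I would observe the crucial identity $c(v)\,w(v) = c(\Gamma)$ for every $v$, which is immediate from the definition of charge. This makes the proposed $\sigma$ a natural candidate, since by Lemma \ref{NetFire} any element of $\ker(L)$ must satisfy $\sigma(v)w(v) = \mathrm{const}$, and $c(v)$ is exactly the smallest positive integer-valued function with this property.

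Next, I would verify $L\sigma = \mathbf{0}$ by computing the change at an arbitrary vertex $u$. The number of chips $u$ loses is $\sigma(u)\val(u) = c(u)\sum_{e\in E(u)} w(u)/w(e) = \sum_{e \in E(u)} c(\Gamma)/w(e)$, using $c(u)w(u)=c(\Gamma)$. The number of chips $u$ receives from a neighbor $v$ along an edge $e$ is $\sigma(v)\,w(v)/w(e) = c(v)w(v)/w(e) = c(\Gamma)/w(e)$. Summing over all edges incident to $u$ gives exactly $\sum_{e \in E(u)} c(\Gamma)/w(e)$, which matches the loss. Hence $(L\sigma)(u)=0$ for every $u$.

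Finally, for the generation claim, suppose $\tau \in \ker(L)$. By Lemma \ref{NetFire}, there exists an integer $k$ with $\tau(v)w(v) = k$ for every $v$. Since each $\tau(v) \in \Z$, the integer $k$ is a common multiple of $w(v_1),\ldots,w(v_n)$, so $k = m\cdot c(\Gamma)$ for some $m \in \Z$. Then $\tau(v) = m\,c(\Gamma)/w(v) = m\,c(v) = m\,\sigma(v)$, proving $\ker(L) = \Z\cdot\sigma$. The only step requiring any care is the verification $L\sigma=\mathbf{0}$, and even that collapses once one notices the key constancy of $c(v)w(v)$; I do not anticipate any genuine obstacle.
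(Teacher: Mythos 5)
Your proposal is correct and follows essentially the same route as the paper: verify $L\sigma=\mathbf{0}$ via the identity $c(v)w(v)=c(\Gamma)$, then invoke Lemma \ref{NetFire} to reduce the generation claim to a divisibility statement. If anything, your final step is tidier than the paper's, which splits into cases on whether all weights are equal; your observation that the constant $k=\tau(v)w(v)$ is a common multiple of all the $w(v_i)$ and hence divisible by $c(\Gamma)$ handles both cases at once.
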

\begin{proof}
First, we show that this firing is in $\ker(L)$: let $v_1,v_2 \in V(\Gamma)$ and $e_1,\dots,e_n \in E(\Gamma)$ be any adjacent vertices and their connecting edges. Suppose we fire $v_1$ and $v_2$ a total of $c(v_1)$ and $c(v_2)$ times respectively. Then
\[c(v_1) \cdot \dfrac{w(v_1)}{w(e_i)} = \dfrac{c(\Gamma)w(v_1)}{w(v_1)w(e_i)} = \dfrac{c(\Gamma)}{w(e_i)} = \dfrac{c(\Gamma)w(v_2)}{w(v_2)w(e_i)} = c(v_2) \cdot \dfrac{w(v_2)}{w(e_i)},\]
for each $i$, which means that the number of chips sent to $v_2$ by $v_1$ along each $e_i$ is the same as the number of chips sent to $v_1$ by $v_2$ along $e_i$. Since this is the case for all $v_1,v_2 \in V(\Gamma)$ and any connecting edge $e_i \in E(\Gamma)$, it follows that $L\sigma = 0$.

Now to show that $\ker(L) = \langle\sigma\rangle$, suppose $\sigma' \in \ker(L)$. We have that $\sigma'(v)w(v) = \sigma'(u)w(u)$ for all $u,v \in V(\Gamma)$ by Lemma \ref{NetFire}. But that means that $w(u)$ divides $\sigma'(v)w(v)$ for all $u,v \in V(\Gamma)$. Thus we have two cases. If $w(u)=w(v)$ for all $u,v \in V(\Gamma)$, then $\sigma'(u) = \sigma'(v)$ for all $u,v \in V(\Gamma)$ and $\sigma' = (k,k,...,k) = k(1,1,...,1) = k(c(v_1),...,c(v_n))$. If there exist $u,v \in V(\Gamma)$ such that $w(u) \neq w(v)$, then we have that $c(v)$ divides $\sigma'(v)$ for all $v$, so $\sigma'(v) = kc(v)$ for some constant $k$.
\end{proof}

\begin{corollary}
    Let $\Gamma$ be a connected graph with $n$ vertices and Laplacian $L$. Then $\text{rank}(L) = n-1$.
\end{corollary}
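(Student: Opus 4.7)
The plan is to derive this corollary as an immediate consequence of Theorem \ref{thm:script}. That theorem asserts $\ker(L) = \langle \sigma \rangle$ where $\sigma$ is the firing script with $\sigma(v) = c(v)$. Since $c(v) = \lcm(w(v_1),\dots,w(v_n))/w(v)$ is a positive integer for every vertex $v$, the generator $\sigma$ is a nonzero element of $\Z^n$, so $\ker(L)$ is a free $\Z$-module of rank exactly $1$.

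Next I would pass from the integral kernel to the rational (or real) kernel in order to invoke the usual rank--nullity theorem. Regarding $L$ as a matrix over $\Q$, one checks that the kernel $\ker_\Q(L)$ is still spanned by the same vector $\sigma$: any rational firing script in the kernel can be scaled by a common denominator to give an integer script in $\ker(L)$, which is then a $\Z$-multiple of $\sigma$ by Theorem \ref{thm:script}, hence a $\Q$-multiple of $\sigma$. Thus $\dim_\Q \ker_\Q(L) = 1$.

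Finally, applying rank--nullity to $L : \Q^n \to \Q^n$ gives $\mathrm{rank}(L) = n - \dim_\Q \ker_\Q(L) = n - 1$, which completes the proof. There is no serious obstacle here; the only subtlety worth flagging is the (minor) step of lifting the $\Z$-module statement of Theorem \ref{thm:script} to the $\Q$-vector-space statement needed for rank--nullity, which is handled by clearing denominators.
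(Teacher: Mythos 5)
Your proposal is correct and follows the same route as the paper, which simply cites Theorem \ref{thm:script} ($\ker(L) = \langle\sigma\rangle$ with $\sigma(v)=c(v)$) and leaves the rank--nullity step implicit. Your version just fills in the details (nonzero generator, clearing denominators to pass to the rational kernel) that the paper omits.
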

\begin{proof}
    This follows from $\ker(L) = \langle \sigma\rangle$ for $\sigma(v) = c(v)$.
\end{proof}

\begin{corollary}
The Jacobian of a finite weighted graph is finite.
\end{corollary}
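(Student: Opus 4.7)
The plan is to show that $\Jac(\Gamma) = \Div^0(\Gamma)/\Prin(\Gamma)$ is the quotient of a rank $n-1$ free abelian group by a full-rank sublattice, and hence finite.

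First I would verify that chip-firing preserves degree, so that $\Prin(\Gamma) \subseteq \Div^0(\Gamma)$. This amounts to checking that the columns of the weighted Laplacian $L$ sum to zero: for fixed $j$, the off-diagonal entries in column $j$ sum to $-\sum_{i \neq j}\sum_{e \in E(v_j,v_i)} \frac{w(v_j)}{w(e)} = -\sum_{e \in E(v_j)} \frac{w(v_j)}{w(e)} = -\val(v_j) = -L_{j,j}$. Equivalently, a lending move at a single vertex $v$ removes $\val(v)$ chips from $v$ and distributes exactly $\val(v)$ chips among the other vertices.

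Next I would identify $\Div^0(\Gamma)$ as a free abelian group of rank $n-1$, and observe that $\Prin(\Gamma) = L(\Z^n)$ as a subgroup of $\Div^0(\Gamma)$. By the preceding corollary, $\text{rank}(L) = n - 1$, so the image lattice $L(\Z^n)$ has rank $n-1$ as well. Thus $\Prin(\Gamma)$ is a full-rank sublattice of $\Div^0(\Gamma) \cong \Z^{n-1}$.

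Finally I would invoke the standard fact from the theory of finitely generated abelian groups that a full-rank sublattice of $\Z^{n-1}$ has finite index (for instance, via the Smith normal form of a matrix whose columns are a generating set of $\Prin(\Gamma)$ inside $\Div^0(\Gamma)$). This gives $|\Jac(\Gamma)| < \infty$. The main step is really the rank computation $\text{rank}(L) = n-1$, and that has already been done via Theorem~\ref{thm:script}; beyond that, the argument is bookkeeping, so I do not anticipate a serious obstacle.
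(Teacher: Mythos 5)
Your argument is correct and follows essentially the same route as the paper: both proofs reduce to the fact that $\ker(L)$ has rank $1$ (so $\Prin(\Gamma) = L(\Z^n)$ has rank $n-1$, matching the rank of $\Div^0(\Gamma)$), and then conclude finiteness of the quotient --- the paper via an exact sequence and additivity of rank, you via the finite index of a full-rank sublattice. Your preliminary check that the columns of $L$ sum to zero is a small addition the paper leaves implicit, and it is correct.
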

\begin{proof}
Recall that $L$ is a linear operator which takes the set of firing scripts $\mathcal{M}(\Gamma)$ to $\Prin(\Gamma)$, which is exactly the kernel of the natural quotient map from $\Div^0(\Gamma)$ to $\Jac(\Gamma)$. If $\sigma(v):=c(v)$ for all $v$, then we have shown that $\sigma$ generates $\ker(L)$, so we have a map $\varphi:\Z \rightarrow \mathcal{M}(\Gamma)$ defined by $k \mapsto k\sigma$ such that $\text{Im}(\varphi) = \ker(L)$. Therefore we can construct the exact sequence
\[0 \rightarrow \Z \xrightarrow{\varphi} \mathcal{M}(\Gamma) \xrightarrow{-L\sigma} \Div^0(\Gamma) \rightarrow \Jac(\Gamma) \rightarrow 0.\]
We know that $\Div^0(\Gamma)$ has rank $n-1$, since any such divisor is of the form $(k_1,...,k_n,-(k_1+\cdots+k_n))$. But $\mathcal{M}(\Gamma)$ has rank $n$, so
\[\text{rank}(\Jac(\Gamma)) = -\text{rank}(\Z)+ \text{rank}(\mathcal{M}(\Gamma))-\text{rank}(\Div^0(\Gamma)) = -1 + n - (n-1) = 0.\]

Thus $\Jac(\Gamma)$ must be a finite group.
\end{proof}

Now that we have the generator for $\ker(L)$ for any weighted graph, we can construct a generalized Greedy Algorithm, the only modification being to terminate at the new ``identity script''. 

\begin{algorithm}[ht]
\SetAlgoLined
\KwResult{This algorithm returns \textsc{True} if $D$ is winnable, else \textsc{False}.}
$m(v_i)=0$ for all $v_i$\;
\While{D is not effective}{
\eIf{$\exists v \in V$ s.t. $m(v)<c(v)$}{
choose any vertex $v_i \in V$ s.t. $D(v_i)<0$\;
borrow at $v_i$\;
set $m(v_i)=m(v_i)+1$\;}{
return \textsc{False}}
}
return \textsc{True}
\caption{The Modified Greedy Algorithm}
\end{algorithm}

\begin{proposition}
Let $D$ be a divisor on $\Gamma$. The Modified Greedy Algorithm is a complete algorithm with termination, and it returns \textsc{True} on input $D$ if and only if $D$ is winnable.
\end{proposition}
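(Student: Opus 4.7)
The plan is to verify three claims — termination, soundness (\textsc{True} $\Rightarrow$ winnable), and completeness (winnable $\Rightarrow$ \textsc{True}). Soundness is immediate: when the loop exits with \textsc{True}, the current divisor $D + Lm$ is effective for some non-negative $m \in \Z^{V(\Gamma)}$, so $D \sim D + Lm \geq 0$ and $D$ is winnable.

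For termination, I proceed by contradiction. Assume an infinite run, and let $U = \{v : m(v) \to \infty\}$. Each iteration increments some $m(v_i)$, so $U \neq \emptyset$; if $U = V(\Gamma)$ then eventually every $m(v) \geq c(v)$ and the \textsc{False} branch fires, a contradiction. Otherwise $U \subsetneq V(\Gamma)$, and by connectedness of $\Gamma$ there is $v \in V(\Gamma) \setminus U$ adjacent to some $u \in U$. Repeated borrowings at $u$ draw chips away from $v$, so $(D + Lm_t)(v) \to -\infty$ and $v$ is eventually permanently in debt. Since each iteration must borrow at an in-debt vertex, together with the chip-conservation identity $\sum_v (D + Lm_t)(v) = \deg(D)$, one forces the algorithm to borrow at $v$ infinitely often, contradicting $v \notin U$.

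Completeness is the heart of the proof. Suppose $D$ is winnable via firing script $\tau$, so $D - L\tau \geq 0$. By Theorem \ref{thm:script}, $\ker L = \langle \sigma \rangle$ with $\sigma(v) = c(v)$, so any shift $\tau - k\sigma$ gives the same principal divisor. Choosing $k$ appropriately produces a non-negative borrowing script $m^* := k\sigma - \tau \geq 0$ with $D + Lm^* \geq 0$. I then prove by induction on the iteration number that the running script $m_t$ can be extended within the budget $c$ to some winning script: initially this is $m_0 = 0$ with extension $m^*$, and at each step greedy borrowing at an in-debt vertex preserves the invariant via an exchange argument that uses $\sigma$ to redistribute the remaining ``budget'' as needed. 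Thus the loop exits via \textsc{True} before the \textsc{False} branch can trigger.

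The main obstacle is the exchange step in completeness: showing that greedy borrowings cannot get the algorithm stuck (returning \textsc{False}) while $D$ remains winnable. Theorem \ref{thm:script} provides the essential structural input, and the generator $\sigma$ plays the role that the all-ones script plays in the classical unweighted Greedy Algorithm proof in \cite{perkinson}; the work lies in adapting that argument to non-uniform budgets $c(v)$.
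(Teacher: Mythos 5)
Your overall decomposition and key ingredients match the paper's proof: for completeness you normalize the winning script by a multiple of the kernel generator from Theorem \ref{thm:script} to obtain a non-negative borrowing script bounded by the charges, and maintain the invariant that the running script extends to it; for non-termination you argue via connectedness that the set of infinitely-borrowing vertices must be all of $V(\Gamma)$, which triggers the \textsc{False} branch. The ``exchange step'' you leave as a black box is in fact elementary and needs no further use of $\sigma$ beyond the initial normalization: if $u$ is in debt under $D+Lm_t$ and $m^*\geq m_t$ is a winning borrowing script, then necessarily $m^*(u)>m_t(u)$, since the remaining borrows all occur at other vertices and can only remove chips from $u$. This is exactly the paper's observation that a vertex whose borrowing budget is exhausted must already be out of debt, and it simultaneously shows the \textsc{False} branch cannot fire first, since $m_t\leq m^*$ and some coordinate of $m^*$ is below $c$.

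The one step that does not hold as written is in your termination argument, case $U\subsetneq V(\Gamma)$: nothing ``forces the algorithm to borrow at $v$ infinitely often,'' because the algorithm may choose \emph{any} in-debt vertex, and an adversarial choice can starve $v$ indefinitely so long as some other in-debt vertex remains available; chip conservation alone does not rule this out. The repair is to observe that because the algorithm only ever borrows at in-debt vertices, every coordinate of $D+Lm_t$ is bounded above (a vertex ends each of its own borrows with fewer than $\val$ chips and otherwise only loses chips); combined with conservation of $\deg(D)$, every coordinate is then also bounded below, contradicting your correct observation that $(D+Lm_t)(v)\to-\infty$ for a starved vertex adjacent to $U$. (The paper's own proof elides the same point, asserting that a neighbor of an infinitely-borrowing vertex must itself borrow infinitely ``to stay out of debt''; the boundedness argument is what actually closes this case.) With that patch your proof is complete and follows essentially the same route as the paper's.
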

\begin{proof}

First, suppose $D$ is winnable, that is, suppose there exists $D' \in \Div(G)$ such that $D \sim D'$ and $D'$ is effective. Now let $\sigma$ be a firing script such that $D' = D-L\sigma$. Using Theorem \ref{thm:script} we may assume that $\sigma \leq 0$ and that there exists some $v \in V$ such that $|\sigma(v)|< c(v)$ by subtracting an appropriate multiple of the identity script. Applying the algorithm, we see the following:
\begin{itemize}
    \item If $D$ is effective, we are done.
    \item If $D$ is not effective, each loop of the algorithm takes some vertex $u$ and replaces the borrowing script $\sigma(u)$ with $\sigma(u)+1$, that is, the new $\sigma$ requires one less borrowing move from $u$. When $\sigma(u) = 0$, $u$ must necessarily be out of debt since no further borrowing move would add chips to $u$. Therefore the algorithm must terminate at the winning solution, since there is a finite number of valid moves before $\sigma(u) = 0$ for all $u \in V(\Gamma)$.
\end{itemize}
Now suppose $D$ is unwinnable. The algorithm only returns \textsc{True} if some sequence of borrowing moves turns $D$ into an effective divisor, so either the algorithm returns \textsc{false} or it does not terminate. Suppose the algorithm does not terminate. Then in each loop of the algorithm there must always be some vertex which is in debt. Since $V(\Gamma)$ is finite, there must exist some nonempty set of vertices $A$ such that $v \in A$ borrows infinitely. If $v \in A$, and $u$ is adjacent to $v$, then $v$ must borrow infinitely from $u$. So in order to stay out of debt, $u$ must also borrow infinitely, so $u \in A$. Since $\Gamma$ is connected, that means $A = V(\Gamma)$. Since every vertex borrows infinitely, there must be some iteration of the algorithm at which point $m(v) \geq c(v)$ for all $v \in V(\Gamma)$, which means the algorithm must terminate with a return value of \textsc{false}. 

Termination and completion follow trivially from the above proof.
\end{proof}

\section{Burning and $q$-Reduced Divisors}

The original formulation of Dhar's Burning Algorithm \cite{dhar} provided an alternative strategy to greed for winning the Dollar Game, which can be called generosity. Let $\Gamma$ be a graph, weighted or unweighted. It is easy to prove that, for any vertex $q$ and any divisor $D$, we can find a linearly equivalent divisor $D'$ such that $D'(v) \geq 0$ for all $v \neq q$. After $q$ has been ``generous'' to the other vertices, it's time for them to pay it back. We wish to know: is there any firing script on $V(\Gamma)\backslash\{q\}$ which brings $q$ out of debt without putting any other vertex back into debt? Dhar's Burning Algorithm provides a method of checking $2^{|V|}-1$ potential firing scripts in linear time.

\begin{center}
    \fbox{\begin{minipage}{15cm}
\noindent\textbf{Dhar's Burning Algorithm:} Let $q$ be a vertex in $V(\Gamma)$, and $D$ a divisor on a $\Gamma$ such that $D(v) \geq 0$ for all $v \neq q$. Let $S = \{q\}$. 

\begin{itemize}[wide=0.5em, leftmargin =*, nosep, before = \leavevmode\vspace{-\baselineskip}]

    \item [1.] Let $\sigma$ be the firing script which fires all vertices not in $S$.
    \item [2.] If $D' = D-L\sigma$ has $D'(v) \geq 0$ for all $v \neq q$, then return $\sigma$. If not, then let $S = S \cup S'$, where $S'$ the set of vertices $v$ such that $D'(v) < 0$. 
    \item [3.] If $S = V(\Gamma)$, then return the zero script. If not, then return to step (1).
\end{itemize} 
\end{minipage}}
\end{center}

Thus we can determine the winnability of a divisor on an unweighted graph in linear time, first by bringing a vertex $q$ out of debt, then repeatedly applying Dhar's Algorithm until either (i) the game is won, or (ii) Dhar's Algorithm returns the zero script, signalling that any firing script on $V(\Gamma)\backslash\{q\}$ will send some other vertex into debt. Figure \ref{fig:dhar2} shows a example run of the algorithm.

\begin{figure}[ht]
    \centering
    \begin{tikzpicture}[node distance={15mm},main/.style = {draw,circle,fill, inner sep=2pt}]
    \begin{scope}
    \node[main,label = {[red]above:-1},red] at (90:1cm) (1) {};
    \node[main,label = {[blue]above:1}] at (162:1cm) (2) {};
    \node[main,label = {[blue]left:0}] at (234:1cm) (3) {};
    \node[main,label = {[blue]right:1}] at (306:1cm) (4) {};
    \node[main,label = {[blue]above:0}] at (18:1cm) (5) {};
    
    \draw[-{Latex[length=3mm]},line width = 2pt,red] (1) to (2);
    \draw[-{Latex[length=3mm]},line width = 2pt,red] (1) to (3);
    \draw[-{Latex[length=3mm]},line width = 2pt,red] (1) to (5);
    \draw (3) to (2);
    \draw (3) to (4);
    \draw (5) to (4);
    \end{scope}
    
    \begin{scope}[xshift=5cm]
    \node[main,label = {[red]above:-1},red] at (90:1cm) (1) {};
    \node[main,label = {[blue]above:1}] at (162:1cm) (2) {};
    \node[main,label = {[blue]left:0},red] at (234:1cm) (3) {};
    \node[main,label = {[blue]right:1}] at (306:1cm) (4) {};
    \node[main,label = {[blue]above:0},red] at (18:1cm) (5) {};
    
    \draw[-{Latex[length=3mm]},line width = 2pt,red] (1) to (2);
    \draw[-{Latex[length=3mm]},line width = 2pt,red] (1) to (3);
    \draw[-{Latex[length=3mm]},line width = 2pt,red] (1) to (5);
    \draw[-{Latex[length=3mm]},line width = 2pt,red] (3) to (2);
    \draw[-{Latex[length=3mm]},line width = 2pt,red] (3) to (4);
    \draw[-{Latex[length=3mm]},line width = 2pt,red] (5) to (4);
    \end{scope}
    
    \begin{scope}[xshift=10cm]
    \node[main,label = {[red]above:-1},red] at (90:1cm) (1) {};
    \node[main,label = {[blue]above:1},red] at (162:1cm) (2) {};
    \node[main,label = {[blue]left:0},red] at (234:1cm) (3) {};
    \node[main,label = {[blue]right:1},red] at (306:1cm) (4) {};
    \node[main,label = {[blue]above:0},red] at (18:1cm) (5) {};
    
    \draw[-{Latex[length=3mm]},line width = 2pt,red] (1) to (2);
    \draw[-{Latex[length=3mm]},line width = 2pt,red] (1) to (3);
    \draw[-{Latex[length=3mm]},line width = 2pt,red] (1) to (5);
    \draw[-{Latex[length=3mm]},line width = 2pt,red] (3) to (2);
    \draw[-{Latex[length=3mm]},line width = 2pt,red] (3) to (4);
    \draw[-{Latex[length=3mm]},line width = 2pt,red] (5) to (4);
    \end{scope}
    \draw[->,line width=1pt] (2,0) to (3,0);
    \draw[->,line width=1pt] (7,0) to (8,0);
    \end{tikzpicture}
    \caption[The burning algorithm on an unweighted graph]{A run of Dhar's Burning Algorithm which returns the zero script. Once the number of arrows directed towards a vertex exceeds its degree, it ``burns'' and sends arrows along its adjacent edges.}
    \label{fig:dhar2}
\end{figure}
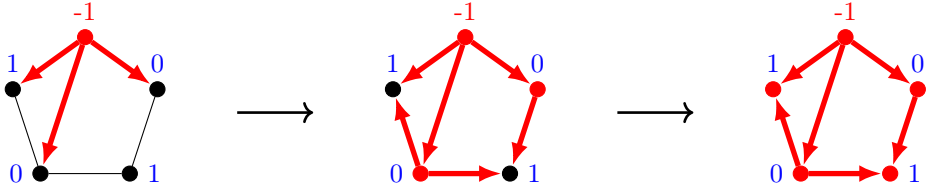

A divisor which burns completely (i.e. returns the empty set) is what we refer to as a $q$-reduced divisor, on which the only firing scripts which keep all non-$q$ vertices out of debt are firing scripts which fire $q$.

\begin{definition}
Let $D$ be a divisor on a graph $\Gamma$. A \emph{legal firing script on} $D$ is a firing script $\sigma$ such that $\sigma(v)\geq0$ for all $v$ (that is, $\sigma$ is a collection of lending moves) and $(D-L\sigma)(v) < 0$ only if $D(v)<0$.
\end{definition}

\begin{definition}
Let $\Gamma$ be a graph, and $q \in V(\Gamma)$. A \emph{q-effective divisor} is a divisor $D$ on $\Gamma$ such that $D(v)\geq 0$ for all $v \in V(\Gamma)\backslash\{q\}$.
\end{definition}

\begin{definition}
Let $\Gamma$ be an unweighted graph, and $D$ a $q$-effective divisor on $\Gamma$. We say that $D$ is \emph{$q$-reduced} if every legal firing script on $D$ has $\sigma(q)>0$.
\end{definition}

In particular, a divisor on an unweighted graph is $q$-reduced if it can't push any chips closer to $q$ without sending another vertex into debt.

Dhar's Algorithm relies on this result on unweighted graphs: given a vertex $q \in V(\Gamma)$, for any firing script $\sigma$, there exists another firing script $\sigma'$ such that $L\sigma = L\sigma'$ and $\sigma'(q) = 0$. This does not generally hold for weighted graphs. Since the kernel-generating script $\sigma(v) = c(v)$ on an unweighted graph is $(1,1,...,1)$, we can replace any firing script $\tau = (\tau(v_1),...,\tau(q),...,\tau(v_n))$ by the equivalent firing script $\tau' = (\tau(v_1)-\tau(q),...,0,...,\tau(v_n)-\tau(q))$. It is clear, though, that when the kernel-generating script for $\Gamma$ is $(c(v_1),...,c(v_n))$, then a firing script $\sigma$ only has an equivalent form $\sigma'$ with $\sigma'(q)=0$ if $c(q)$ divides $\sigma(q)$.

If we seek to construct a general form of Dhar's Burning Algorithm, we must first generalize the idea of a $q$-reduced divisor. Thinking of the continued iteration of Dhar's Algorithm as an attempt to amass as much wealth as possible in a neighborhood of $q$, we are led to the concept of a divisor which is a dead end, where it is not possible to push any more chips to $q$ without putting another vertex in debt. 
\begin{definition}\label{qred}
A divisor $D \in \Div(G)$ is called \emph{$q$-reduced} if the following properties hold: 

\begin{itemize}[wide=0.5em, leftmargin =*, nosep, before = \leavevmode\vspace{-\baselineskip}]
    \item [(i)] $D$ is $q$-effective, i.e. $D(v) \geq 0$ for all $v \neq q$,
    \item [(ii)] any linearly equivalent $q$-effective divisor $D'$ is such that $D'(q) \leq D(q)$, and
    \item [(iii)] if $D' \in [D]$ is such that $D'(q) = D(q)$, then any effective firing script $\sigma$ which sends $D$ to $D'$ has $\sigma(q)\neq 0$.
\end{itemize}
\end{definition}
Thus we can generalize Dhar's Burning Algorithm by modifying in two places: first, instead of firing each vertex once and removing the vertices which go into debt from the next script, we fire each vertex $v \neq q$ a total of $c(v)$ times, and if it goes into debt, the next script fires $v$ a total of $c(v)-1$ times. The second modification is that we must account for scripts which do fire $q$, and so we run the central algorithm $c(q)$ times, considering scripts of the form $(\sigma(v_1),...,\sigma(q)=f,...,\sigma(v_n))$ for some fixed $f$ in each iteration. Each choice of $f$ gives us a candidate for a legal firing, and to optimize we choose one which leaves the most chips at $q$.

\begin{algorithm}[ht]
\SetAlgoLined
\KwResult{This algorithm inputs a $q$-effective divisor $D$, and returns a legal firing script $\sigma$, with $\sigma = \mathbf{0}$ iff $D$ is $q$-reduced.}
$\sigma(v) = c(v)$ if $v \neq q$\;
$f = 0$ (the number of times $q$ has been fired)\;
\While{$f < c(q)$}{
$\sigma(q) = f$\;
\While{$\sigma(v) \neq 0$ for some $v \neq q$}{
$m_i = (D-L \cdot \sigma)(v_i)$\;
\If{$m_i \geq 0$ $\forall i$}{
break\;
}
\For{$v_i:m_i<0$}{
$\sigma(v_i) = \sigma(v_i)-1$\;}
}
$\sigma_f = \sigma$\;
fire $q$ ($f = f+1$)\;
}
$f=0$\;
\For{$f<c(q)$}{
\If{$[L \cdot \sigma_f](q) \leq [L \cdot \sigma_k](q) \;\forall k$}{
return $D_f$\;
}
$f = f+1$\;
}
\caption{The Modified Burning Algorithm}
\end{algorithm}

\begin{proposition}
    The Modified Burning Algorithm is a complete algorithm with termination which returns a legal firing script, and it returns $\mathbf{0}$ if and only if $D$ is $q$-reduced.
\end{proposition}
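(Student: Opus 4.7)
The plan is to show termination, legality of the returned script, and the ``returns $\mathbf{0}$ iff $D$ is $q$-reduced'' characterization.

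Termination is routine: the outer loop runs $c(q)$ times, and each inner while-loop starts from a nonnegative $\sigma$ and either breaks or strictly decreases $\sum_{v\neq q}\sigma(v)$. Nonnegativity is preserved because if $v\neq q$ has $\sigma(v)=0$ then $m_v=(D-L\sigma)(v)=D(v)+\sum_{u\neq v}|L_{vu}|\sigma(u)\geq 0$ (as $D$ is $q$-effective), so $v$ is never flagged for a decrement. On exit, either the break condition certifies $q$-effectivity of $D-L\sigma_f$ or $\sigma_f=f\mathbf{1}_q$, which is trivially legal; in either case $\sigma_f$ is a legal firing script.

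For the forward direction, assume $D$ is $q$-reduced. Each $\sigma_f$ is legal, so $D-L\sigma_f$ is $q$-effective and (ii) of Definition~\ref{qred} gives $[L\sigma_f](q)\geq 0$. For $\sigma_0$, the condition $\sigma_0(q)=0$ forces $[L\sigma_0](q)=\sum_{v\neq q}L_{qv}\sigma_0(v)\leq 0$ (since $L_{qv}\leq 0$ for $v\neq q$), and combined with (ii) this yields $[L\sigma_0](q)=0$, so $(D-L\sigma_0)(q)=D(q)$. Applying (iii): if $\sigma_0\neq\mathbf{0}$ then $\sigma_0(q)\neq 0$, contradicting $\sigma_0(q)=0$; hence $\sigma_0=\mathbf{0}$. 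Thus $\sigma_0$ attains the minimum value $0$ of $[L\sigma_f](q)$ and the algorithm returns $\mathbf{0}$.

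The backward direction rests on a maximality property: for each $f\in[0,c(q))$, $\sigma_f$ is the componentwise maximum of
\[S_f=\{\tau\geq 0:\tau(q)=f,\ \tau(v)\leq c(v)\text{ for all }v\neq q,\ D-L\tau\text{ is $q$-effective}\}.\]
I would prove this by induction on the inner loop: if $\tau\in S_f$, then the initial $\hat\sigma$ dominates $\tau$, and whenever a decrement of $\hat\sigma(v_i)$ is triggered by $(D-L\hat\sigma)(v_i)<0\leq(D-L\tau)(v_i)$, expanding this inequality using $L_{v_iv_i}>0$, $L_{v_iu}\leq 0$ for $u\neq v_i$, and the inductive hypothesis $\hat\sigma\geq\tau$ forces $\hat\sigma(v_i)>\tau(v_i)$, so the decrement preserves $\hat\sigma\geq\tau$. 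Assuming the algorithm returns $\mathbf{0}$—so $\sigma_0=\mathbf{0}$ and $[L\sigma_f](q)\geq 0$ for all $f$—a hypothetical witness $\tau$ of non-$q$-reducedness with $\tau\leq c$ and $\tau(q)\in[0,c(q))$ is ruled out directly: maximality gives $\tau\leq\sigma_{\tau(q)}$, and computing $[L\tau](q)-[L\sigma_{\tau(q)}](q)=\sum_{v\neq q}L_{qv}(\tau(v)-\sigma_{\tau(q)}(v))\geq 0$ yields $[L\tau](q)\geq 0$, while for (iii)-witnesses one further has $\tau\leq\sigma_0=\mathbf{0}$.

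The main obstacle is handling witnesses $\tau$ that exceed $c$ at some coordinate or have $\tau(q)\geq c(q)$, lying outside $\bigcup_fS_f$. These cases require a weighted-graph analogue of the chip-firing abelian property: subtract copies of the kernel generator $c$ to bring $\tau(q)$ into $[0,c(q))$, and when nonnegativity at some $v\neq q$ blocks the direct reduction, exploit the structural rigidity imposed on witnesses (for example, a witness of (iii) failure has support disjoint from $N(q)\cup\{q\}$, allowing a local argument on the induced subgraph $\Gamma\setminus\{q\}$) to produce an equivalent representative within $\bigcup_fS_f$. I expect this reduction step to be the technical heart of the proof.
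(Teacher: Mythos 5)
Your termination argument, the legality check, and your maximality lemma for the inner loop (that $\sigma_f$ is the componentwise maximum of $S_f$) all match the paper's proof in substance; the paper phrases the maximality step as ``let $\tau$ be the least script checked which dominates $\sigma$'' and derives the same contradiction from $(D-L\tau)(v_i)\geq(D-L\sigma)(v_i)\geq 0$. Your forward direction is in fact more careful than the paper's: the paper asserts that $q$-reducedness forces $\sigma_f=\mathbf{0}$ for every $f$, which cannot be literally true for $f\geq 1$ (since $\sigma_f(q)=f$) and is not needed; your route through properties (ii) and (iii) of Definition~\ref{qred} --- showing $[L\sigma_f](q)\geq 0$ for all $f$ while $\sigma_0=\mathbf{0}$ and $[L\sigma_0](q)=0$, so the minimization step returns $\mathbf{0}$ --- is the right way to close that direction.

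The genuine gap is exactly where you flagged it: reducing an arbitrary witness of non-$q$-reducedness to one lying in $\bigcup_f S_f$. This step is needed and you have not supplied it. The paper's device is more elementary than the ``structural rigidity'' analysis you anticipate: given an effective witness $\sigma$ with $\sigma(v_i)>c(v_i)$ for some $i$, take the minimal $k$ with $\sigma(v_i)-kc(v_i)\leq c(v_i)$ for all $i$ and pass to the truncation $\tau(v_i)=\max\{\sigma(v_i)-kc(v_i),0\}$. Since the charge script $c$ generates $\ker L$ (Theorem~\ref{thm:script}), $\sigma-kc$ induces the same principal divisor, and truncating negative entries to zero preserves $q$-effectivity of $D-L\tau$: a vertex reset to $0$ sends nothing and only receives, so it stays at or above $D(v)\geq 0$, while every other vertex receives at least as much as it would under $\sigma-kc$. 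You should also record two points your sketch is well positioned to handle but the paper leaves implicit: (a) a witness of a (ii)-violation is a priori an arbitrary integer script, but adding multiples of $c$ makes it effective without changing $L\tau$; and (b) after truncation one could have $\tau(q)=c(q)$, outside the range $f<c(q)$ --- this cannot occur for a (ii)-witness, since $\tau\leq c$ and $Lc=\mathbf{0}$ force $[L\tau](q)\geq 0$ when $\tau(q)=c(q)$, and a (iii)-witness has $\tau(q)=0$ anyway. With this truncation lemma in place, your computation $[L\tau](q)\geq[L\sigma_{\tau(q)}](q)\geq 0$ finishes the argument, and the proposed detour through $N(q)\cup\{q\}$ and the induced subgraph $\Gamma\setminus\{q\}$ is unnecessary.
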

\begin{proof}
    It is clear that if the algorithm returns a script, then the script is legal.

    We first show termination. The outer while loop has only a finite set of stages ($0\leq f < c(q)<\infty$). For the inner while loop, we begin with the list $(\sigma(v_1),...,\sigma(v_n)) = (c(v_1),...,c(v_n))$, and in iterating, we either terminate with $m_i\geq 0$ for all $i$, or we have some $i$ for which $\sigma(v_i)$ is replaced by $\sigma(v_i)-1$. Note that if $\sigma(v_i) = 0$, then $m_i \geq 0$ since firing $v_i$ is the only way for $v_i$ to lose any chips (as the scripts considered only consist of lending moves), and by assumption $D(v_i)\geq 0$. Thus there must be a finite number of stages in the inner while loop, and so the algorithm terminates.

    Next, we show completeness. Suppose $D$ is not $q$-reduced, i.e. there exists some legal non-zero firing script $(\sigma(v_1),...,\sigma(v_n))$ on $D$, and suppose the algorithm returns $\mathbf{0}$. We may assume WLOG that $0\leq \sigma(v_i)\leq c(v_i)$ for all $v_i$ and that $\sigma(v_i)>0$ for some $v_i$. (To verify this, let $\sigma$ be a legal firing script such that $\sigma(v_{i})>c(v_{i})$ for some collection of vertices $v_1,...v_m$ and $\sigma(v_i)\leq c(v_i)$ otherwise. Now let $k$ be the minimum value such that $\sigma(v_i)-kc(v_i)\leq c(v_i)$ for all $i$. Then the script
    \[\tau = (\max\{\sigma(v_i)-kc(v_i),0\})\]
    is legal with $\tau(v_i)\leq c(v_i)$ for all $i$.) Thus the algorithm checks the script $(c(v_1),...,c(v_n))$ which dominates $\sigma$. Let $\tau$ be the least script checked which dominates $\sigma$. Then we have that $\tau(v_i) = \sigma(v_i)$ for at least one vertex $v_i$. But running the inner loop on $\tau$, we have that $m_i = (D-L\cdot\tau)(v_i) \geq (D-L\cdot\sigma)(v_i)\geq 0$, since extra lending moves at other vertices only contribute positively to $m_i$. Thus the script which replaces $\tau$ must also dominate $\sigma$, a contradiction. So the algorithm must return some legal non-zero firing script.

    The proof of completeness shows the forward direction of the theorem; to see the backward direction, suppose $D$ is $q$-reduced. Then for any possible firing set $\sigma$, we have that $m_i = [L \cdot \sigma]_i + \deg(v_i)<0$, in which case the while $\sigma \neq \mathbf{0}$ loop would not break until $\sigma = \mathbf{0}$, regardless of how many times $q$ is fired. That means that $\sigma_f = \mathbf{0}$ for all $f$, so the algorithm returns $\mathbf{0}$. By contrapositive we are done
\end{proof}

Just as with Dhar's Algorithm in the unweighted case, this modified form of Dhar's Algorithm is incredibly powerful, considering
\[\prod_{v \in V(\Gamma)} (c(v)+1)-1 \geq 2^{|V|}-1\]
potential firing scripts in at most
\[c(q)\left(\sum_{v \in V(\Gamma\backslash\{q\})} c(v) \right)\leq c(q)c(\Gamma)(|V(\Gamma)|-1) = O(|V|) \text{ steps}.\]

We find that some preliminary results demonstrated in \cite{perkinson} regarding $q$-reduced divisors can be extended to weighted graphs fairly easily using this algorithm.

\begin{definition}
Let $D,D' \in \Div(\Gamma)$, and let $v_1 \prec ... \prec v_n$ be a tree ordering compatible with a spanning tree $T$ rooted at $q$. We say that $D' \prec D$ if (i) $\deg(D')<\deg(D)$, or (ii) $\deg(D') = \deg(D)$ and $D'(v_k)>D(v_k)$ for the smallest index $k$ such that $D'(v_k) \neq D(v_k)$. We say that $\prec$ is a \emph{tree ordering rooted at $q$}. 
\end{definition}

\begin{lemma}\label{exist}
Let $D \in \Div(G)$, and fix $q \in V$. Then there exists at least one $q$-reduced divisor linearly equivalent to $D$.
\end{lemma}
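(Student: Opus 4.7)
The plan is to extract a $q$-reduced divisor as the $\prec$-minimum of the set
\[S = \{D' \in \Div(\Gamma) : D' \sim D \text{ and } D' \text{ is } q\text{-effective}\},\]
where $\prec$ is the tree ordering rooted at $q$, and show it is $q$-reduced. First I would verify $S \neq \emptyset$, which is the observation stated in the background (just before Dhar's Burning Algorithm) that iterated borrowing at non-$q$ vertices in debt produces a $q$-effective representative; termination can be justified, for instance, using the finiteness of $\Jac(\Gamma)$ established as a corollary earlier in the paper.

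Next I would argue $S$ has a $\prec$-minimum $D_0$. For any $D' \in S$, shared degree together with effectivity away from $q$ yield $D'(q) \leq \deg(D)$ and $0 \leq D'(v) \leq \deg(D) - D'(q)$ for $v \neq q$, so the coordinate vectors of elements of $S$ lie in a bounded sublattice of $\Z^n$. A $\prec$-minimum $D_0 \in S$ can then be obtained by greedily maximizing at each coordinate $v_k$ in tree order: each candidate set is a bounded nonempty set of integers, so every maximum is attained.

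Finally I would verify $D_0$ satisfies the three conditions of Definition \ref{qred}. Properties (i) and (ii) are immediate from $D_0 \in S$ and from the maximality of $D_0(q) = D_0(v_1)$ over $S$. The main obstacle is (iii): I would suppose for contradiction that some nonzero effective (legal) firing script $\sigma$ with $\sigma(q) = 0$ sends $D_0$ to $D'$ with $D'(q) = D_0(q)$, and argue via tree structure. Let $k$ be the smallest index with $\sigma(v_k) > 0$; then $k \geq 2$, and its tree-parent $v_{p(k)}$ satisfies $p(k) < k$ and $\sigma(v_{p(k)}) = 0$, while its fired neighbor $v_k$ contributes a strictly negative off-diagonal term, so $(L\sigma)(v_{p(k)}) < 0$ and $D'(v_{p(k)}) > D_0(v_{p(k)})$. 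For every $j < p(k)$, minimality of $k$ forces $\sigma(v_j) = 0$, and $\sigma \geq 0$ forces $(L\sigma)(v_j) \leq 0$, so $D'(v_j) \geq D_0(v_j)$. Hence the smallest index where $D'$ and $D_0$ differ is at most $p(k)$ with $D'$ strictly larger there, producing $D' \prec D_0$; since $\sigma$ legal keeps $D'$ in $S$, this contradicts the $\prec$-minimality of $D_0$. The most delicate step is the tree-parent bookkeeping that combines the strict inequality at $v_{p(k)}$ with the weak inequalities at shallower vertices to exhibit $D' \prec D_0$.
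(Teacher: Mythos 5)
Your proof is correct, but it takes a genuinely different route from the paper's. The paper first produces a $q$-effective representative by lending down a spanning tree, and then runs a descent: it repeatedly applies the Modified Burning Algorithm, asserts that each nonzero legal set-firing it returns yields a strictly $\prec$-smaller $q$-effective divisor, and terminates by noting that the number of chips held away from $q$ is a non-increasing sequence of non-negative integers, so the strictly decreasing chain is finite. You instead invoke an extremal principle: you take the $\prec$-minimum $D_0$ of the set $S$ of $q$-effective representatives and verify Definition \ref{qred} directly, with the tree-parent computation ($(L\sigma)(v_{p(k)})<0$ from the fired child, $(L\sigma)(v_j)\leq 0$ for $j<p(k)$ from $\sigma\geq 0$) supplying property (iii). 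This is the classical Baker--Norine-style existence argument; it is self-contained (it does not lean on the correctness of the Modified Burning Algorithm), and it actually makes explicit the step the paper only asserts, namely that a nonzero legal firing with $\sigma(q)=0$ moves strictly down in the tree order. What it gives up is constructiveness: the paper's proof doubles as a procedure for computing the $q$-reduced representative.

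Two points to tighten. First, $S$ is not a bounded subset of $\Z^n$: firing $q$ repeatedly drives $D'(q)\to-\infty$ while keeping $D'$ in $S$, so ``bounded sublattice'' is false as stated. Your greedy maximization survives anyway, because the first candidate set $\{D'(q):D'\in S\}$ is nonempty and bounded above by $\deg(D)$, and once $D'(q)$ is pinned at its maximum the remaining coordinates are confined to $\{0,\dots,\deg(D)-D'(q)\}$; you should phrase it that way. Second, you verify (iii) only for legal scripts --- you need legality to conclude $D'\in S$ --- whereas Definition \ref{qred}(iii) literally quantifies over all effective scripts. Read literally the condition is unsatisfiable (on a path where $b$ is at distance two from $q$, firing $b$ once is effective, fixes $D(q)$, and has $\sigma(q)=0$), so the legal reading is the only tenable one and is the one the paper's algorithm uses, but you should state explicitly that this is the interpretation you are proving. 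Your appeal to finiteness of $\Jac(\Gamma)$ for $S\neq\emptyset$ is also serviceable, though the cleaner version of that argument is that $N(v-q)$ is principal for $N=|\Jac(\Gamma)|$, so adding large multiples of these principal divisors makes every non-$q$ coordinate nonnegative; the paper instead uses the spanning-tree lending scheme.
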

\begin{proof}
Fix a tree ordering $v_1 \prec ... \prec v_n$ compatible with a spanning tree $T$ rooted at $q$. Now each vertex $v \neq q$ has a unique neighbor $\epsilon(v) \in T$ such that $\epsilon(v) \prec v$. Identically to the unweighted case, bring $v_n,v_{n-1},...,v_2$ out of debt successively by lending to $v_i$ from $\epsilon(v_i)$.

So we can assume that $D(v) \geq 0$ for all $v \neq q$. If $D$ is not $q$-reduced, we can apply the modified burning algorithm to find a legal set firing $\sigma$ such that the divisor $D' = D -L\sigma$ is still $q$-effective and $D'$ is strictly smaller than $D$ with respect to the tree ordering rooted at $q$. By repeating this process, we get a sequence of divisors $D=D_1,D_2,...$. Split $D_i$ into $E_i+k_iq$, where $E_i(v) = D_i(v)$ if $v\neq q$ and $E_i(q)=0$. We have that $\deg(E_i) \geq \deg(E_{i+1}) \geq 0$ for all $i$ (as the algorithm will only choose a set-firing $\sigma$ if it leaves $q$ with at least as many chips as it already had). So there can only be finitely many distinct divisors in the sequence. But each divisor in the sequence is distinct by definition, so the sequence must stop at a $q$-reduced divisor.
\end{proof}

\begin{corollary}
Let $D \in \Div(G)$, and let $D'$ be a $q$-reduced divisor linearly equivalent to $D$. Then $D$ is winnable if and only if $D'(q) \geq 0$.
\end{corollary}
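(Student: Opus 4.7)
The plan is to derive both directions directly from Definition \ref{qred} of $q$-reduced, so the argument will essentially be bookkeeping rather than new content. Write $\tilde D$ for the given $q$-reduced divisor linearly equivalent to $D$.

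For the easy direction ($\Leftarrow$), I would assume $\tilde D(q) \geq 0$ and just observe that property (i) in Definition \ref{qred} gives $\tilde D(v) \geq 0$ for every $v \neq q$, so $\tilde D$ is in fact effective everywhere. Since winnability depends only on the linear equivalence class and $D \sim \tilde D$, $D$ is winnable.

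For the other direction ($\Rightarrow$), I would suppose $D$ is winnable and pick an effective $D^{*} \in [D]$. Then $D^{*} \sim \tilde D$ and $D^{*}$ is in particular $q$-effective, so property (ii) of the $q$-reduced divisor $\tilde D$ forces $D^{*}(q) \leq \tilde D(q)$. Effectiveness of $D^{*}$ gives $D^{*}(q) \geq 0$, and chaining the two inequalities yields $\tilde D(q) \geq 0$.

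There is no genuine obstacle here: the work is entirely in Lemma \ref{exist} (existence of a $q$-reduced representative) and in property (ii) of Definition \ref{qred}, which packages the maximality of $D(q)$ among $q$-effective representatives. The only care needed is to avoid clashing with the $D, D'$ notation used inside Definition \ref{qred}; that is why I would rename the $q$-reduced divisor to $\tilde D$ throughout the proof.
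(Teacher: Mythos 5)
Your proof is correct, and it is slightly more direct than the paper's. For the forward direction the paper takes an effective representative $E\sim D$, runs the reduction process (repeated legal set firings on $V\setminus\{q\}$) to land on a $q$-reduced divisor $E'$ that is still effective, and then invokes the fact that all $q$-reduced representatives of a class agree at $q$ to conclude $D'(q)=E'(q)\geq 0$. You skip the construction of $E'$ entirely: since the effective witness $D^{*}$ is in particular $q$-effective, property (ii) of Definition \ref{qred} applied to $\tilde D$ immediately gives $D^{*}(q)\leq \tilde D(q)$, hence $\tilde D(q)\geq 0$. This buys you independence from the claim that the reduction process, started at an effective divisor, terminates at a $q$-reduced divisor that is still effective (true, but an extra step the paper leaves implicit), and it makes clear that the whole content of the corollary is one application of the maximality clause (ii). The backward direction is identical in both: $q$-effectiveness plus $\tilde D(q)\geq 0$ gives effectiveness.
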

\begin{proof}
Suppose $D$ is winnable, and let $E \sim D$. From $E$, perform all legal set firings on $V\backslash\{q\}$, arriving at a $q$-reduced divisor $E'\geq 0$. By definition, $D'(q)=E'(q) \geq 0$ for any $q$-reduced divisor $D' \in [D]$. The converse is true by definition.
\end{proof}

In the unweighted case, there is a natural bijection between ordered pairs $(D,q)$ and $q$-reduced divisors (that is, each divisor class $[D]$ has a unique $q$-reduced representative for each vertex $q$). However, in the weighted case, we find that there may be a multitude of linearly equivalent divisors which satisfy Definition \ref{qred}. As an example, the divisors $(1,0,0,-1)$ and $(0,1,0,-1)$ on the graph of weights 1 and 2 in Figure 8 are both linearly equivalent and $q$-reduced.

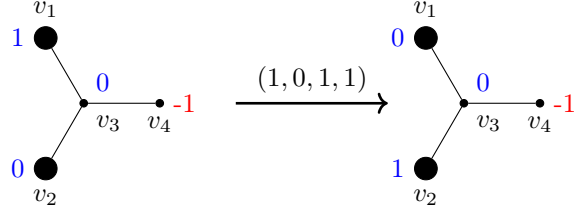
\begin{figure}[ht]
    \centering
    \begin{tikzpicture}[node distance={15mm},main/.style = {draw,circle,fill, inner sep=1pt}]   
    \begin{scope}
    \node[main,label = {[blue]above right:0},label = below right:{$v_3$}] at (0,0) (1) {};
    \node[draw,circle,fill,inner sep=3pt,label = {[blue]left:1},label = above:{$v_1$}] at (120:1cm) (2) {};
    \node[draw,circle,fill,inner sep=3pt,label = {[blue]left:0},label = below:{$v_2$}] at (240:1cm) (3) {};
    \node[main,label = {[red]right:-1},label = below:{$v_4$}] at (0:1cm) (4) {};
    
    \draw (1) to (2);
    \draw (1) to (3);
    \draw (1) to (4);
    \end{scope}
    
    \begin{scope}[xshift=5cm]
    \node[main,label = {[blue]above right:0},label = below right:{$v_3$}] at (0,0) (1) {};
    \node[draw,circle,fill,inner sep=3pt,label = {[blue]left:0},label = above:{$v_1$}] at (120:1cm) (2) {};
    \node[draw,circle,fill,inner sep=3pt,label = {[blue]left:1},label = below:{$v_2$}] at (240:1cm) (3) {};
    \node[main,label = {[red]right:-1},label = below:{$v_4$}] at (0:1cm) (4) {};
    
    \draw (1) to (2);
    \draw (1) to (3);
    \draw (1) to (4);
    \end{scope}
    \draw[->,line width=1pt] (2,0) to node[midway,above] {$( 1,0,1,1)$} (4,0);
    \end{tikzpicture}
    \caption{These divisors are linearly equivalent, but are each $q$-reduced.}
    \label{fig:doubleq}
\end{figure}

\begin{definition}
Let $D,D' \in \Div(\Gamma)$, $q$ in $V(\Gamma)$. We say $D$ and $D'$ are \emph{$q$-linearly equivalent} and write $D \sim_q D'$ if there exists a firing script $\sigma$ which takes $D$ to $D'$ such that $\sigma(q) = 0$. We call an equivalence class under this relation a \emph{$q$-class} and denote it by $[D]_q$.
\end{definition}

It is clear that this is a finer relation than regular linear equivalence. There are $c(q)$ total $q$-classes in each divisor class, and in particular, we know that given $q$, the set of divisor classes and the set of $q$-classes are equivalent for all $q$ if and only if $c(q) = 1$ for all $q$. In general though, what is the relationship between $q$-classes and the set of $q$-reduced divisors?

We know that if two $q$-classes $[D]_q$ and $[D']_q$ (where $D \sim D'$) each have a $q$-reduced divisor (say $E$ and $E'$, respectively), then $E(q) = E'(q)$ by definition. Therefore the firing set which sends $E$ to $E'$ must send an equal amount of chips to and from $q$. This leads us to the definition of the local charge of a vertex $q$.

\begin{definition}
Let $q \in V(\Gamma)$. The \emph{local charge} of $q$ is
\[c_\ell(q) = \dfrac{\lcm\left(\gcd\limits_{v \in V(\Gamma)} \left( \sum\limits_{e \in E(q,v)}\frac{w(v)}{e}\right),\val(q)\right)}{\val(q)},\]
where $\val(q)$ is the weighted valency of $q$.
\end{definition}

\begin{lemma}
    Let $D, D'$ be $q$-reduced divisors with $D'=D-L\sigma$. Then $c_\ell(q)|\sigma(q)$.
\end{lemma}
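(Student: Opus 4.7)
The plan is to exploit the fact that $q$-reducedness forces $D(q)=D'(q)$, convert that equality into a divisibility statement involving only the local data at $q$, and then finish with a short number-theoretic manipulation.

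First I would observe that both clauses (ii) of Definition \ref{qred} applied in turn give $D(q)\le D'(q)$ and $D'(q)\le D(q)$, hence $D(q)=D'(q)$. Now let $a_v=\sum_{e\in E(q,v)}\tfrac{w(v)}{w(e)}$ for $v\in V(\Gamma)$, so $a_q=0$ and $a_v$ is exactly the number of chips received by $q$ when $v$ fires once; let $g=\gcd_{v\in V(\Gamma)} a_v=\gcd_{v\neq q} a_v$, which is the inner gcd appearing in $c_\ell(q)$. Reading off the $q$-th coordinate of $L\sigma$ from the definition of the weighted Laplacian gives
\[
0 \;=\; (D-D')(q) \;=\; [L\sigma](q) \;=\; \val(q)\,\sigma(q)\;-\;\sum_{v\neq q} a_v\,\sigma(v).
\]
Thus $\val(q)\,\sigma(q)=\sum_{v\neq q} a_v\,\sigma(v)$ is a $\Z$-linear combination of the $a_v$'s, so $g\mid \val(q)\,\sigma(q)$.

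Next I would rewrite the local charge using the elementary identity $\lcm(a,b)\cdot\gcd(a,b)=ab$. Writing $d=\gcd(g,\val(q))$, we obtain
\[
c_\ell(q)\;=\;\frac{\lcm(g,\val(q))}{\val(q)}\;=\;\frac{g\cdot\val(q)/d}{\val(q)}\;=\;\frac{g}{d}.
\]
Now from $g\mid \val(q)\,\sigma(q)$ we get $\tfrac{g}{d}\mid \tfrac{\val(q)}{d}\,\sigma(q)$, and since $\tfrac{g}{d}$ and $\tfrac{\val(q)}{d}$ are coprime by the definition of $d$, Euclid's lemma yields $\tfrac{g}{d}\mid \sigma(q)$, i.e.\ $c_\ell(q)\mid\sigma(q)$.

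There is no serious obstacle here; the only subtlety is verifying that $g$ really is the gcd of the chip-flow contributions into $q$ (the $v=q$ term contributes $a_q=0$ and drops out harmlessly) and then not confusing the two standard identities $\lcm(g,m)/m=g/\gcd(g,m)$ and ``$g\mid mk \Rightarrow g/\gcd(g,m)\mid k$.'' The role of $q$-reducedness is concentrated entirely in the very first step, which pins $D(q)=D'(q)$ and forces the net flow at $q$ to vanish; the remainder is pure arithmetic.
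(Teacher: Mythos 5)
Your proof is correct and takes essentially the same route as the paper's: both use clause (ii) of the definition of $q$-reduced in each direction to get $D(q)=D'(q)$, equate the outflow $\val(q)\,\sigma(q)$ with the inflow $\sum_{v} a_v\sigma(v)$ so that both $\val(q)$ and $g=\gcd_v a_v$ divide that common quantity, and then conclude $c_\ell(q)\mid\sigma(q)$. The only difference is cosmetic -- the paper writes the final step as $\lcm(g,\val(q))\mid \sigma(q)\val(q)$ followed by division by $\val(q)$, which is arithmetically identical to your cancellation of $d=\gcd(g,\val(q))$ plus Euclid's lemma.
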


\begin{proof}
    Let $D, D'$ be $q$-reduced with $D'=D-L\sigma$ such that $\sigma(q)<c_\ell(q)$. Then the number of chips sent from $q$ is 
    \[k\val(q)<c_\ell(q)\val(q) = \lcm\left(\gcd\limits_{v \in V(\Gamma)} \left( \sum\limits_{e \in E(q,v)}\frac{w(v)}{e}\right),\val(q)\right).\]
    Now let $n$ denote the number of chips sent to $q$ from other vertices.
    By definition, $D(q)=D'(q)$, so we have that
    \[k\val(q)=n =\sum_{v \in V} \sigma(v) \left(\sum_{e \in E(q,v)} \dfrac{w(v)}{e}\right)\]
    But then $\val(q)|n$, and by its summation representation, 
    \[\gcd\limits_{v \in V(\Gamma)} \left( \sum\limits_{e \in E(q,v)}\frac{w(v)}{e}\right)\big|n,\quad\text{so}\quad\lcm\left(\gcd\limits_{v \in V(\Gamma)} \left( \sum\limits_{e \in E(q,v)}\frac{w(v)}{e}\right),\val(q)\right)\big|n = k\val(q).\]
    Dividing by $\val(q)$, we have that $c_\ell(q)|k$.
\end{proof}

\begin{corollary}\label{cor:q-class}
Let $D \in \Div(G)$, and let $\{[D_1]_q,...,[D_n]_q\}$ be the set of distinct $q$-classes which are subsets of $[D]$, where $D_i$ is the divisor obtained from $D$ by firing $q$ exactly $i$ times. Then 

\begin{itemize}[wide=0.5em, leftmargin =*, nosep, before = \leavevmode\vspace{-\baselineskip}]
    \item [(i)] Every $q$-class contains at most one $q$-reduced divisor, and
    \item [(ii)] If $[D_j]_q$ contains a $q$-reduced divisor, then any $q$-class containing a $q$-reduced divisor is of the form $[D_{j+kc_\ell(q)}]_q$ for some integer $k$.
\end{itemize}
\end{corollary}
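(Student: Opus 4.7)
The plan is to prove (i) directly from Definition~\ref{qred}(iii) via a positive/negative decomposition of the firing script, and then to read off (ii) from the preceding Lemma together with the divisibility $c_\ell(q) \mid c(q)$.

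For (i), I would fix two $q$-reduced divisors $E, E'$ in the same $q$-class and force $E = E'$. Property (ii) of Definition~\ref{qred} gives $E(q) = E'(q)$. Let $\sigma$ be a firing script with $\sigma(q) = 0$ such that $E' = E - L\sigma$; this $\sigma$ is unique, since any two such scripts differ by an element of $\ker L = \langle(c(v_1),\ldots,c(v_n))\rangle$ whose $q$-coordinate is a multiple of $c(q)>0$ and thus zero. Decompose $\sigma = \sigma^+ - \sigma^-$ with $\sigma^\pm \geq 0$ having disjoint supports, so that $\sigma^+(q) = \sigma^-(q) = 0$, and introduce the auxiliary divisor $E'' := E - L\sigma^+ = E' - L\sigma^-$. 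The central claim, and the main obstacle, is that $E''$ is $q$-effective: setting $c_{u,v} := \sum_{e \in E(u,v)} w(u)/w(e)$, for any $v \neq q$ either $v \notin \operatorname{supp}(\sigma^+)$, in which case $[L\sigma^+](v) = -\sum_u \sigma^+(u) c_{u,v} \leq 0$ and $E''(v) \geq E(v) \geq 0$, or $v \in \operatorname{supp}(\sigma^+)$, in which case disjointness places $v \notin \operatorname{supp}(\sigma^-)$ and the same computation applied to $\sigma^-$ gives $E''(v) \geq E'(v) \geq 0$. Once $E''$ is $q$-effective, $E''(q) = E(q) + \sum_{u \neq q} \sigma^+(u) c_{u,q} \geq E(q)$ combined with property (ii) forces $E''(q) = E(q)$; property (iii) applied to the effective script $\sigma^+$ (with $\sigma^+(q) = 0$) then forces $E'' = E$. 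Hence $\sigma^+ \in \ker L$, and $\sigma^+(q) = 0$ forces $\sigma^+ = 0$; by symmetry $\sigma^- = 0$, so $\sigma = 0$ and $E = E'$.

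For (ii), I would take $q$-reduced divisors $E \in [D_j]_q$ and $E' \in [D_k]_q$, write $E = D_j - L\mu$ and $E' = D_k - L\nu$ with $\mu(q) = \nu(q) = 0$, and use $D_j - D_k = (k-j)Le_q$ to obtain the firing script $\rho = (k-j)e_q + (\nu - \mu)$ with $L\rho = E - E'$ and $\rho(q) = k - j$. Applying the preceding Lemma to the $q$-reduced pair $E, E'$ yields $c_\ell(q) \mid (k-j)$, and hence $[D_k]_q = [D_{j+mc_\ell(q)}]_q$ for some $m \in \Z$. The conclusion is independent of the choice of representative of $\rho$, since applying the Lemma in the degenerate case $E = E'$ with $\rho = (c(v_1),\ldots,c(v_n)) \in \ker L$ shows $c_\ell(q) \mid c(q)$. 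All of the real content is in part (i) and in the preceding Lemma; once those are in hand, (ii) is a direct bookkeeping consequence.
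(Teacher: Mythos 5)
Your argument is correct. Part (ii) is essentially the paper's own proof: both $q$-reduced representatives have the same value at $q$, their difference is $L\rho$ for a script $\rho$ with $\rho(q)=k-j$, and the preceding lemma gives $c_\ell(q)\mid k-j$. Part (i) is where you genuinely diverge, and for the better. The paper disposes of (i) in one sentence by citing Definition~\ref{qred}(iii): any \emph{effective} script between distinct $q$-reduced divisors must fire $q$. But membership in the same $q$-class only supplies a script $\sigma$ with $\sigma(q)=0$ that may take both signs, and (iii) says nothing about such scripts; the paper silently elides this reduction. Your decomposition $\sigma=\sigma^+-\sigma^-$ into effective scripts with disjoint supports, together with the check that the intermediate divisor $E''=E-L\sigma^+=E'-L\sigma^-$ is $q$-effective because every $v\neq q$ avoids the support of at least one of $\sigma^{\pm}$, is precisely the missing step: it manufactures effective scripts not firing $q$ to which properties (ii) and (iii) legitimately apply, and it is the weighted analogue of the standard set-firing argument for uniqueness of reduced divisors in the unweighted setting. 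What the paper's version buys is brevity; what yours buys is that (i) actually follows from the definition as stated. One cosmetic caution: Definition~\ref{qred}(iii) taken literally is falsified by the zero script, so your phrase ``property (iii) forces $E''=E$'' should be read as ``(iii), applied to nonzero effective scripts, forces $\sigma^+=\mathbf{0}$ and hence $E''=E$'' --- which is also how the paper itself uses the definition.
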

\begin{proof}
Let $E,E' \in [D]$ be distinct $q$-reduced divisors. Then directly from the definition, we have that $[E]_q \neq [E']_q$, since any effective firing script $\sigma$ sending $E$ to $E'$ or $E'$ to $E$ must have $\sigma(q)>0$. Therefore each $[D_i]_q$ contains at most one $q$-reduced divisor.

Now suppose $[D_i]_q$ and $[D_j]_q$ each contain $q$-reduced divisors, labelled $E$ and $E'$ respectively. Then $E(q) = E'(q)$, which means that any set firing $\sigma$ which sends $E$ to $E'$ has $\sigma(q) = kc_\ell(q)$ for some $k \in \Z$, which means that $j = i + kc_\ell(q)$; that is, $[D_j]_q = [D_{i+kc_\ell(q)}]_q$.
\end{proof}

\begin{corollary}
Let $\Gamma$ be a weighted graph such that $w(v) \in \{1,2\}$ for all $v \in V(\Gamma)$. Then $\Gamma$ has at least one vertex $q$ such that there is a unique $q$-reduced form for each divisor $D$ on $\Gamma$.
\end{corollary}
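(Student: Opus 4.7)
The plan is to exhibit a vertex $q \in V(\Gamma)$ with charge $c(q) = 1$. The excerpt establishes (just before the definition of local charge) that each linear equivalence class $[D]$ decomposes into exactly $c(q)$ distinct $q$-classes; so choosing $q$ with $c(q) = 1$ forces $[D]$ to coincide with the single $q$-class $[D]_q$. Then Corollary \ref{cor:q-class}(i) says this $q$-class contains at most one $q$-reduced divisor, while Lemma \ref{exist} guarantees at least one, and uniqueness follows.

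To produce such a vertex, I would split on $c(\Gamma) = \lcm(w(v_1),\dots,w(v_n))$. Since $w(v) \in \{1,2\}$ for every $v$, we have $c(\Gamma) \in \{1,2\}$. If $c(\Gamma) = 1$, every vertex has weight $1$ and any choice of $q$ satisfies $c(q) = c(\Gamma)/w(q) = 1$. If $c(\Gamma) = 2$, then at least one vertex must carry the larger weight; taking $q$ to be any vertex of weight $2$ gives $c(q) = 2/2 = 1$. In either case a suitable $q$ is available.

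With $q$ fixed, the argument collapses to a one-line chain: $[D] = [D]_q$ by the count of $q$-classes, existence of a $q$-reduced representative by Lemma \ref{exist}, and uniqueness within the (single) $q$-class by Corollary \ref{cor:q-class}(i). The main (and only real) obstacle is the case analysis above; beyond it, there is no computational content. The key conceptual point is that the hypothesis $w(v) \in \{1,2\}$ is precisely what ensures the global $\lcm$ is attained at some vertex, which is the mechanism by which $c(q)$ can be driven down to $1$.
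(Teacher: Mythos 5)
Your proposal is correct and follows essentially the same route as the paper: both arguments reduce to exhibiting a vertex $q$ with $c(q)=1$ (a weight-$2$ vertex if one exists, else any vertex of the all-weight-$1$ graph) and then invoking the at-most-one-$q$-reduced-divisor-per-$q$-class statement together with existence. Your version is marginally more streamlined in that it bypasses the paper's mention of $c_\ell(q)$, which is redundant once $c(q)=1$.
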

\begin{proof}
Consider any vertex $q \in \Gamma$ with $w(q) = 2$. If such a vertex does not exist, then the graph is unweighted and the uniqueness of $q$-reduced divisors is clear. Otherwise, $c(q) = 1$, and since $c_\ell(q)$ divides $c(q)$, therefore $c_\ell(q) = c(q) = 1$. Thus any divisor $D$ on $\Gamma$ must have a unique $q$-reduced form.
\end{proof}

Theorem \ref{cor:q-class} provides an upper bound for the number of linearly equivalent $q$-reduced divisors for any given pair $(\Gamma,q)$; in particular, we know that if $c_\ell(q) = c(q)$, then $q$ has a unique $q$-reduced divisor. Note that there need not be a $q$-reduced divisor in every $q$-class of the form $[D_{j+kc_\ell(q)}]_q$; consider the following example, where $q$ is the bottom-right vertex:

\begin{figure}[ht]
    \centering
    \begin{tikzpicture}[node distance={15mm},main/.style = {draw,circle,fill, inner sep=2pt}]
    \begin{scope}
    \node[main,label = {[blue]right:1}] at (45:1cm) (1) {};
    \node[draw, circle, inner sep=4pt,fill,label = {[blue]left:0}] at (135:1cm) (2) {};
    \node[main,label = {[blue]left:0}] at (225:1cm) (3) {};
    \node[main,label = {[red]right:-1}] at (315:1cm) (4) {};
    \node[main] at (135:1cm) (5) {};
    \draw (1) -- (2);
    \draw (2) -- (3);
    \draw (3) -- (4);
    \draw (4) -- (1);
    \end{scope}
    
    \begin{scope}[xshift = 6cm]
    \node[main,label = {[blue]right:0}] at (45:1cm) (1) {};
    \node[draw, circle, inner sep=4pt,fill,label = {[blue]left:1}] at (135:1cm) (2) {};
    \node[main,label = {[blue]left:1}] at (225:1cm) (3) {};
    \node[main,label = {[red]right:-2}] at (315:1cm) (4) {};
    \node[main] at (135:1cm) (5) {};
    \draw (1) -- (2);
    \draw (2) -- (3);
    \draw (3) -- (4);
    \draw (4) -- (1);
    \end{scope}
    \draw[->] (2,0) -- (4,0);
    \end{tikzpicture}
    \caption[The local charge on a weighted graph]{Although $c(q)/c_\ell(q) = 2$, there is only one $q$-reduced form for this divisor.}
    \label{fig:qclass}
\end{figure}
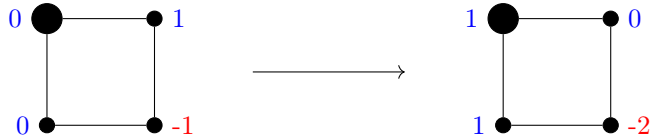

\section{Maximally Unwinnable Divisors}

In this section for simplicity we act under the assumption that each graph has some vertex $q$ of charge 1. This allows us to consider only a single iteration of the outer while-loop in the Modified Burning Algorithm, and guarantees that each class has exactly one $q$-reduced form. It is clear that these results may be extended to the general case with some care.

\begin{definition}
A divisor $D$ is called \emph{maximally unwinnable} if $D$ is unwinnable and $D+v$ is winnable for all $v \in V$.
\end{definition}

\begin{definition}
An \emph{orientation} of a graph $\Gamma$ is an assignment of a direction to each edge. An \emph{acyclic orientation} of a graph $\Gamma$ is an orientation such that following the direction of the orientation will never form a cycle.
\end{definition}

In the unweighted case, a critical result is the existence of a bijection between acyclic orientations of a graph with a unique source vertex and its set of maximal unwinnable divisors. In particular, we know that given an acyclic orientation $\mathcal{O}$ with a unique source vertex on $\Gamma$, the divisor defined by
\[D(\mathcal{O})(v) = \text{indeg}_\mathcal{O}(v)-1\]
is a maximally unwinnable divisor, where indeg$(v)$ is the number of arrows entering the vertex $v$ on $\mathcal{O}$. This follows from the fact that Dhar's Algorithm has a unique run on any maximally unwinnable divisor. The notion of orientation is not as useful on weighted graphs, where the modified algorithm often results in burning edges multiple times. Instead, we introduce the notion of words on a graph.

\begin{definition}
Let $\Gamma$ be a weighted graph. We say that $W = W(1)W(2)...W(L)$ is a \emph{word} on $\Gamma$ if $W(i) \in V(\Gamma)$ for all $i$ and $|\Ind_W(v)| = c(v)$ for all $v$, where the \emph{index set} of $v$, $\Ind_W(v)$, is defined by
\[\Ind_W(v) = \{n \in \{1,...,L\}:W(n) = v\}.\]
We also denote by $k^W_v(n):\N \rightarrow \N$ the step function of $v$ on $W$, that is,
\[k^W_v(n) = \#\{m \in \Ind_W(v):m<n\}.\]
\end{definition}

To each unwinnable divisor we can naturally construct an associated word by tracking the order in which each vertex burns.  Consider the following graph and divisor, with $q=v_2$:

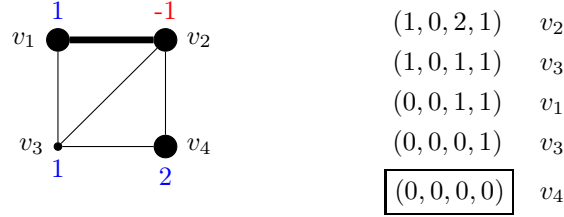
\begin{figure}[ht]
\centering
\begin{minipage}{0.3\textwidth}
\centering
\begin{tikzpicture}[node distance={15mm},main/.style = {draw,circle,fill, inner sep=3pt}]
\begin{scope}
\node[main,label = right:$v_2$,label = {[red]above:-1}] at (45:1cm) (1) {};
\node[main,label = left:$v_1$,label = {[blue]above:1}] at (135:1cm) (2) {};
\node[draw,circle,fill,inner sep=1pt,label = left:$v_3$,label = {[blue]below:1}] at (225:1cm) (3) {};
\node[main,label = right:$v_4$,label = {[blue]below:2}] at (315:1cm) (4) {};

\draw [line width = 0.9mm] (1) -- (2);
\draw (1) -- (3);
\draw (2) -- (3);
\draw (3) -- (4);
\draw (4) -- (1);
\end{scope}
\end{tikzpicture}
\end{minipage}%
\begin{minipage}{0.3\textwidth}
\begin{alignat*}{2}
    &\;(1,0,2,1) \quad && v_2 \\
    &\;(1,0,1,1) \quad && v_3 \\
    &\;(0,0,1,1) \quad && v_1 \\
    &\;(0,0,0,1) \quad && v_3\\
    &\boxed{(0,0,0,0)} \quad && v_4
\end{alignat*}
\end{minipage}
\caption[An unwinnable divisor and its algorithm run]{An unwinnable divisor and the firing scripts checked by the burning algorithm}
\label{fig:dhargraph}
\end{figure}

So we associate the word $v_2v_3v_1v_3v_4$. In order to reverse the association, first let $h:V \times V \rightarrow \mathbb{N}$ be defined by
\begin{equation}
    h^W_n(v,u) = \sum_{e \in E(u,v)} \dfrac{k^W_u(n)w(u)-k^W_{v}(n)w(v)}{w(e)}.
\label{eq:h}
\end{equation}
This quantity gives the net number of chips lost by $v$ to $u$ when firing the script $\sigma$ checked at the $n$th stage.

\begin{definition}
The divisor corresponding to a word $W$ on $\Gamma$ is 
\begin{equation}
    \mathbf{D}(W) = \sum_{v \in V(\Gamma)} \left(\min_{i \in \Ind_W(v)}\left\{f_i^W(v)\right\}-1\right)v,
\end{equation}
where $f_i^W(v) = \sum_{u \in V(\Gamma)} h_i^W(v,u)$. 
\end{definition}

This construction formalizes the following intuition: each time a vertex burns during the algorithm, it means that the potential firing given by that stage, which would remove $x$ chips from $v$, removes more chips than $v$ has, and so $D(v)<x$. The amount of chips removed in this potential firing is given by $f_i^W(v)$, and so $\mathbf{D}(W)(v)$ gives the maximum number of chips possible at $v$ for the algorithm to proceed in the order given by $W$.

\begin{theorem}\label{unwinnable}
Let $W$ be a word on $\Gamma$ such that $W(1) = q$ with $c(q) = 1$. Then $\mathbf{D}(W)$ is unwinnable. If $\mathbf{D}(W)$ is $q$-effective, it is also $q$-reduced.
\end{theorem}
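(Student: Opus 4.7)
The plan is to prove both halves by tracking what the Modified Burning Algorithm does on $\mathbf{D}(W)$: since $c(q) = 1$, the outer loop has a single iteration at $f = 0$, and I want to show that the inner loop drives the script down to $\mathbf{0}$ following the order dictated by $W$. The computation of $\mathbf{D}(W)(q)$ is immediate: since $c(q) = 1$ and $W(1) = q$, we have $\Ind_W(q) = \{1\}$, and at $n = 1$ every step-function value $k^W_v(1)$ vanishes, so each $h^W_1(q,u) = 0$ and $f^W_1(q) = 0$, yielding $\mathbf{D}(W)(q) = 0 - 1 = -1$.

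The engine of the proof is a Laplacian identity. For each $n \geq 1$ define $\sigma_n \in \mathcal{M}(\Gamma)$ by $\sigma_n(v) = c(v) - k^W_v(n+1)$. Using $Lc = 0$ from Theorem \ref{thm:script} and unpacking the Laplacian formula against the definitions of $h_i^W$ and $f_i^W$, one verifies
\[
(L\sigma_n)(v) \;=\; f^W_{n+1}(v) \qquad \text{for every } v \in V(\Gamma).
\]
Observe that $\sigma_1$ coincides with the algorithm's initial state (since $W(1) = q$ forces $k^W_v(2) = 0$ for every $v \neq q$), and $\sigma_L = \mathbf{0}$ because each $v$ appears in $W$ exactly $c(v)$ times. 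For $v = W(n+1)$, the index $n+1$ lies in $\Ind_W(v)$, so the $\min$ in the definition of $\mathbf{D}(W)$ gives
\[
(\mathbf{D}(W) - L\sigma_n)(v) \;=\; \mathbf{D}(W)(v) - f^W_{n+1}(v) \;\leq\; -1,
\]
forcing the algorithm to decrement $\sigma(W(n+1))$ when in state $\sigma_n$. Marching through $n = 1, \ldots, L-1$ traces a valid run of the inner loop from $\sigma_1$ down to $\sigma_L = \mathbf{0}$, so the algorithm returns the zero script and $\mathbf{D}(W)$ is $q$-reduced whenever it is $q$-effective.

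Unwinnability then follows from the corollary after Lemma \ref{exist} in the $q$-effective case: $q$-reducedness together with $\mathbf{D}(W)(q) = -1 < 0$ forces unwinnability. For the general case, I would use that $c(q) = 1$ lets us normalize any firing script $\sigma$ to have $\sigma(q) = 0$ via the kernel generator of Theorem \ref{thm:script}; any hypothetical $\sigma$ with $\mathbf{D}(W) - L\sigma \geq \mathbf{0}$ would then exhibit a state from which the algorithm should burn but does not, contradicting the trace constructed above. The main obstacle is the middle step: the inner loop processes all simultaneously eligible decrements in a batch, whereas $W$ encodes them in a linear order, so one must check that no intermediate algorithm state $\sigma^*$ reached from $\sigma_1$ prematurely satisfies $m_v \geq 0$ for all $v \neq q$ unless $\sigma^* = \mathbf{0}$. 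Concretely, one argues that the algorithm's running decrement counts never overtake $k^W_v(n)$ at the respective positions, which follows from the identity together with the $\min$ defining $\mathbf{D}(W)$.
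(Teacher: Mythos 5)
Your proposal is correct and takes essentially the same route as the paper: your Laplacian identity $(L\sigma_n)(v) = f^W_{n+1}(v)$ is precisely the paper's chip-counting computation showing that the script checked when $W(i)=u$ is about to burn removes $f_i^W(u)$ chips from $u$, and both arguments conclude that the Modified Burning Algorithm burns every vertex in the order dictated by $W$ and returns $\mathbf{0}$, giving $q$-reducedness in the $q$-effective case and unwinnability from $\mathbf{D}(W)(q)=-1$. If anything you are slightly more careful than the paper, which silently passes over the batching-versus-linear-order subtlety you flag as the ``main obstacle'' and asserts unwinnability directly rather than via the corollary to Lemma \ref{exist}.
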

\begin{proof}
Since $W(1) = q$, we have that $\mathbf{D}(W)(q) = -1$, and therefore $\mathbf{D}(W)$ is not an effective divisor. Now suppose that each $v \neq q$ has burned $k_{v}^W(i)$ times, and suppose $W(i)=u$. Then $u$ itself has burned $k_{u}^W(i)$ times, and therefore for each $v$, $u$ is sending $(c(u)-k_u^W(i))\frac{w(u)}{w(e)}$ chips along $e$ for each $e$ with roots $u$ and $v$, and is receiving a total of $(c(v)-k_v^W(i))\frac{w(v)}{w(e)}$ chips from $v$ along $e$. Thus the net number of chips being sent to $u$ along all $e \in E(u,v)$ is 
\begin{align*}
    \sum_{e \in E(u,v)} \left(\dfrac{(c(u)-k_u^W(i))w(u) - (c(v)-k_v^W(i))w(v)}{w(e)}\right) &= \sum_{e \in E(u,v)} \dfrac{k_v^W(i)w(v)-k_v^W(i)w(u)}{w(e)} = h_i^W(u,v).
\end{align*}
and the total number of chips being removed from $u$ at this step is $\sum_v h_i^W(u,v) =f_i^W(u)$. So if we run the burning algorithm on $\mathbf{D}(W)$, we have less chips at each stage than we can afford to send:
\[\mathbf{D}(W)(u) = \min_{i \in \Ind_W(u)}\left\{f_i^W(u)\right\} - 1 < f_i^W(u)\]
for all $i \in \Ind_W(u)$. Therefore $W(i) = u$ burns, and since this is true for each $i$ we have that the algorithm returns $\mathbf{0}$. Since $\mathbf{D}(W)(q)<0$, and there are no legal scripts on $V-\{q\}$, $\mathbf{D}(W)$ must be unwinnable. If $\mathbf{D}(W)$ is $q$-effective it clearly follows that $\mathbf{D}(W)$ is $q$-reduced.
\end{proof}

Note that not all divisors produced this way are $q$-effective (consider $W = v_1v_3v_3v_4v_2$ on the graph in Figure \ref{fig:dhargraph}).

\begin{proposition}
    Assume $c(q)=1$. Let $\mathcal{W}$ denote the set of words on $\Gamma$ with $W(1) = q$. Then each maximally unwinnable divisor $M \in \mathbf{D}(\mathcal{W})$.
\end{proposition}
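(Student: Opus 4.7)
The plan is to construct, for each maximally unwinnable divisor $M$, a word $W \in \mathcal{W}$ with $\mathbf{D}(W) = M$; implicit in this is that $M$ is the (unique, since $c(q) = 1$) $q$-reduced representative of its class, a reduction allowed because maximal unwinnability is preserved by linear equivalence. The first preliminary step is to pin down $M(q) = -1$: the corollary following Lemma~\ref{exist} gives $M(q) < 0$, while the winnability of $M + q$ combined with properties (ii) and (iii) of the $q$-reducedness of $M$ (applied to any nontrivial effective $\sigma$ with $\sigma(q) = 0$ for which $M + q - L\sigma$ remains $q$-effective) rules out any such $\sigma$. Thus $M + q$ is itself $q$-reduced with $q$-value $M(q) + 1 \geq 0$, forcing $M(q) = -1$.

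Next I would run the Modified Burning Algorithm on $M$ with $f = 0$. Because $M$ is $q$-reduced and unwinnable, the inner loop drives $\sigma(v)$ from $c(v)$ down to $0$ for every $v \neq q$ before returning $\mathbf{0}$. Recording the order in which vertices burn (breaking ties in the inner \textbf{for} loop arbitrarily) yields a sequence $w_2, w_3, \dots, w_N$ in which each $v \neq q$ appears exactly $c(v)$ times; prepending $q$ produces a word $W = q\, w_2 w_3 \cdots w_N \in \mathcal{W}$.

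To verify $\mathbf{D}(W) = M$, a bookkeeping computation using the kernel identity $\sum_{u \neq v} c(u)\sum_{e \in E(u,v)} w(u)/w(e) = c(v)\val(v)$ from Theorem~\ref{thm:script} shows that at stage $i$ the current script satisfies $\sigma(u) = c(u) - k_u^W(i)$ uniformly in $u$ and $(L\sigma)(v) = f_i^W(v)$. Consequently, whenever $W(i) = v$ burns, $M(v) < f_i^W(v)$; minimizing over $i \in \Ind_W(v)$ gives $M(v) \leq \mathbf{D}(W)(v)$ for every $v$, with equality at $q$ since $\mathbf{D}(W)(q) = -1 = M(q)$. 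For equality elsewhere, suppose $\mathbf{D}(W)(v_0) > M(v_0)$ for some $v_0 \neq q$; then $\mathbf{D}(W) \geq M + v_0$, and since $M + v_0$ is winnable by maximal unwinnability while winnability is closed upward under the partial order on $\Div(\Gamma)$, $\mathbf{D}(W)$ would itself be winnable, contradicting Theorem~\ref{unwinnable}.

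The main obstacle I anticipate is the identity $(L\sigma)(v) = f_i^W(v)$: one has to reconcile the algorithm's state, in which $q$ is never fired, with the word convention, in which $q$ is recorded as the first letter. The cleanest resolution is to use the kernel identity to absorb the ``missing'' firing at $q$ into the relation $\sigma(u) = c(u) - k_u^W(i)$ holding uniformly in $u$. A secondary subtlety is the claim that $M + q$ is $q$-reduced, which is where the hypothesis $c(q) = 1$ enters essentially, via the coincidence of $\sim$ and $\sim_q$.
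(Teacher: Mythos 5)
Your proposal is correct and follows essentially the same route as the paper: associate a word $W_M$ to $M$ by recording the burn order of the Modified Burning Algorithm, show $M \leq \mathbf{D}(W_M)$ via the identity between the stage-$i$ script's effect and $f_i^W$, and force equality because $\mathbf{D}(W_M)$ is unwinnable (Theorem~\ref{unwinnable}) while $M+v$ is winnable for every $v$. The only differences are expository: you make explicit the reduction to $q$-reduced representatives, the computation $M(q)=-1$, and the bookkeeping identity $(L\sigma)(v)=f_i^W(v)$, all of which the paper leaves implicit in its blanket claim that $\mathbf{D}(W)$ is the maximal divisor burning in the order $W$.
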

\begin{proof}
    Recall that $\mathbf{D}(W)$ provides a divisor with the maximal number of chips at each vertex such that the algorithm proceeds in the order $W$. To verify this claim, suppose $D' > \mathbf{D}(W)$ runs in the order $W$. Let $v$ be such that $D'(v)>\mathbf{D}(W)(v)$. Then $D'(v) \geq \min_{i \in \Ind_W(v)}\{f_i^W(v)\}$. But then for some $i$ (in particular an $i$ at which the minimum is attained), $D'$ has at least as many chips at $v$ as are being sent away from $v$ in the $i$th-stage firing script, and therefore $v$ cannot burn at the $i$th stage, a contradiction. 
    
    Now to each maximally unwinnable divisor $M$ we can associate a word $W_M$ simply by running the Modified Burning Algorithm on $M$. If $\mathbf{D}(W_M) \neq M$ then we have an unwinnable divisor $\mathbf{D}(W_M)$ which dominates $M$, a contradiction. Thus $\mathbf{D}(W_M) = W$.
\end{proof}

In general, weighted graphs do not have a bijection between words with $W(1)=q$ and maximally unwinnable divisors. Considering every word on our graph $\Gamma$ in Figure 8, we see that there are exactly 12 words on $\Gamma$ starting with $v_1$. However, only 5 of these words produce maximally unwinnable divisors. It is easy enough to find the maximally unwinnable divisors by generating the entire list $\mathbf{D}(\mathcal{W})$; since every divisor in the list is $q$-reduced and the list contains all $q$-reduced maximally unwinnable divisors, a divisor in $\mathbf{D}(\mathcal{W})$ is maximally unwinnable if and only if it is not dominated by any other divisor in $\mathbf{D}(\mathcal{W})$. This is not a particularly enlightening criterion, however, as it gives us little insight as to what kind of words map to maximally unwinnable divisors.

\section{Concluding Remarks}

It would be optimistic to hope that this relationship between words and maximally unwinnable divisors could be leveraged to prove a generalization of the tropical Riemann-Roch Theorem to weighted graphs just as the relationship between acyclic orientations and maximally unwinnable divisors was used in the unweighted case. In particular, knowing that the canonical divisor on an unweighted graph can be defined by $K(v) = \val(v)-2$ or equivalently by $K = \mathbf{D}(\mathcal{O})+\mathbf{D}(\mathcal{O}_{rev})$ for some acyclic orientation $\mathcal{O}$, we may na\"{i}vely guess that a canonical divisor on a weighted graph may be defined identically using the weighted valency or by $\mathbf{D}(W)+\mathbf{D}{(W_{rev})}$. These can both be seen quickly to fail. On the graph in Figure \ref{fig:canonical}, which has weights 1 and 2, we should expect a canonical divisor $K = (1,3,1,2)$. However, if the Riemann-Roch Theorem in its standard formulation holds, we would expect a natural involution on the set of maximally unwinnable divisors given by $D \mapsto K-D$. This does not hold, as evidenced in Figure \ref{fig:canonical}. It is natural to consider the possibility that this can be remedied by a new notion of genus on a weighted graph which may take on fractional values.

\begin{figure}[ht]
\centering
\begin{minipage}{0.3\textwidth}
\centering
\begin{tikzpicture}[node distance={15mm},main/.style = {draw,circle,fill, inner sep=4pt}]
\begin{scope}
\node[main,label = right:$v_2$] at (45:1cm) (1) {};
\node[main,label = left:$v_1$] at (135:1cm) (2) {};
\node[draw,circle,fill,inner sep=1pt,label = left:$v_3$] at (225:1cm) (3) {};
\node[main,label = right:$v_4$] at (315:1cm) (4) {};

\draw [line width = 1.3mm] (1) -- (2);
\draw (1) -- (3);
\draw (2) -- (3);
\draw (3) -- (4);
\draw (4) -- (1);
\end{scope}
\end{tikzpicture}
\end{minipage}%
\begin{minipage}{0.7\textwidth}
\begin{align*}
    [D_2]+[D_2] = [(-1,1,0,3)] + [(2,2,0,-1)] = [(1,3,0,2)] \\
    [D_1]+[D_5] = [(0,3,1,-1)] + [(1,0,-1,3)] = [(1,3,0,2)] \\
    [D_3]+[D_4] = [(-1,1,1,2)] + [(2,2,-1,0)] = [(1,3,0,2)] 
\end{align*}
\end{minipage}
\caption{While the maximally unwinnable divisors can be paired to make a potential canonical divisor, it is not given by $K(v) = \val(v)-2$.}
\label{fig:canonical}
\end{figure}

\end{document}